\flushbottom \setlength{\parindent}{16pt}
\title{\bf Maximum scattered linear sets and complete caps in Galois spaces\thanks{\textcolor{black}{The
research  was supported by
Ministry for Education, University and Research of Italy MIUR (Project
PRIN 2012 "Geometrie di Galois e strutture di incidenza") and by the Italian National
Group for Algebraic and Geometric Structures and their Applications (GNSAGA
- INdAM)}}}
\author{Daniele Bartoli, Massimo Giulietti, Giuseppe Marino and Olga Polverino}\date{}
\providecommand{\keywords}[1]{\textbf{{Keywords:}} #1.}
\begin{document}
\maketitle

\newtheorem{theorem}{Theorem}[section]
\newtheorem{lemma}[theorem]{Lemma}
\newtheorem{remark}[theorem]{Remark}
\newtheorem{cor}[theorem]{Corollary}
\newtheorem{prop}[theorem]{Proposition}
\newtheorem{defin}[theorem]{Definition}

\makeatother
\newcommand{\Prf}{\noindent{\bf Proof}.\quad }
\renewcommand{\labelenumi}{(\alph{enumi})}
\renewcommand{\qed}{\hfill \mbox{\raggedright \rule{0.1in}{0.1in}}}


\def\B{\mathbf B}
\def\C{\mathbf C}
\def\Z{\mathbf Z}
\def\Q{\mathbf Q}
\def\W{\mathbf W}
\def\a{\mathbf a}
\def\b{\mathbf b}
\def\c{\mathbf c}
\def\d{\mathbf d}
\def\e{\mathbf e}
\def\l{\mathbf l}
\def\v{\mathbf v}
\def\w{\mathbf w}
\def\x{\mathbf x}
\def\y{\mathbf y}
\def\z{\mathbf z}
\def\t{\mathbf t}
\def\cD{\mathcal D}
\def\cC{\mathcal C}
\def\cH{\mathcal H}
\def\cM{{\mathcal M}}
\def\cK{\mathcal K}
\def\cQ{\mathcal Q}
\def\cU{\mathcal U}
\def\cS{\mathcal S}
\def\cT{\mathcal T}
\def\cR{\mathcal R}
\def\cN{\mathcal N}
\def\cA{\mathcal A}
\def\cF{\mathcal F}
\def\cL{\mathcal L}

\def\PG{{\rm PG}}
\def\GF{{\rm GF}}

\def\Pg{PG(5,q)}
\def\pg{PG(3,q^2)}
\def\ppg{PG(3,q)}
\def\HH{{\cal H}(2,q^2)}
\def\F{\mathbb F}
\def\Fn{\mathbb F_{q^n}}
\def\P{\mathbb P}
\def\V{\mathbb V}

\def\ps@headings{
 \def\@oddhead{\footnotesize\rm\hfill\runningheadodd\hfill\thepage}
 \def\@evenhead{\footnotesize\rm\thepage\hfill\runningheadeven\hfill}
 \def\@oddfoot{}
 \def\@evenfoot{\@oddfoot}
}

\begin{abstract}
\textcolor{black}{Explicit constructions of infinite families of scattered $\F_q$--linear sets in $PG(r-1,q^t)$ of maximal rank $\frac{rt}2$, for $t$ even, are provided. When $q=2$ and $r$ is odd, these linear sets correspond to complete caps in $AG(r,2^t)$ fixed by a translation group of size $2^{\frac{rt}2}$. The doubling construction applied to such caps  gives complete caps in $AG(r+1,2^t)$ of size $2^{\frac{rt}2+1}$. For Galois spaces of even dimension greater than $2$ and even square order, this solves the long-standing problem of establishing whether the theoretical lower bound for the size of a complete cap is substantially sharp.}
\end{abstract}

\keywords{\textcolor{black}{Galois spaces, linear sets, complete caps}}

\bigskip

\par\noindent

\section{Introduction}

\medskip

Let $\Lambda=PG(V,\F_{q^t})=PG(r-1,q^t)$, $q=p^h$, $p$ prime, with $V$ vector space of dimension r over $\F_{q^t}$, and let
$L$ be a set of points of $\Lambda$. The set $L$ is said to be an
{\it $\F_q$--linear} set of $\Lambda$ of rank $t$ if it is defined by
the non-zero vectors of an $\F_q$-vector subspace $U$ of $V$ of
dimension $t$, i.e.
$$L=L_U=\{\langle {\bf u}\rangle_{\F_{q^t}}: {\bf u}\in
U\setminus\{{\bf 0}\}\}.$$
We point out that different vector subspaces can define the same linear set. For this reason a linear set and the vector space defining it must be considered as coming in pair.

Let  $\Omega=PG(W,\F_{q^t})$ be a subspace of $\Lambda$ and let $L_U$
be an $\F_q$-linear set of $\Lambda$. Then $\Omega\cap L_U$ is an
$\F_q$--linear set of $\Omega$ defined by the $\F_q$--vector
subspace $U\cap W$ and, if $dim_{\F_q}(W\cap U)=i$, we say that
$\Omega$ has {\it weight $i$} in $L_U$. Hence a point of $\Lambda$ belongs to $L_U$ if and only if it has weight at least 1 and if $L_U$ has rank $k$, then $|L_U|\leq  q^{k-1}+q^{k-2}+\dots+q+1$. For further details on linear sets see \cite{Polverino2010}, \cite{LaVa2010}, \cite{LV}, \cite{LuMaPoTr-Sub}, \cite{LP}.

An $\F_q$--linear set $L_U$ of $\Lambda$ of rank $k$ is {\em
scattered} if all of its points have weight 1, or equivalently, if
$L_U$ has maximum size $q^{k-1}+q^{k-2}+\cdots+q+1$. A scattered $\F_q$--linear set of $\Lambda$ of highest possible
rank is a {\it maximum scattered $\F_q$--linear set} of $\Lambda$; see \cite{BL2000}.

In \cite{BL2000} the authors obtain the following result on the rank of a maximum scattered linear set; see also \cite{LPhdThesis}.

\begin{theorem}\label{thm:BL} {\rm (\!\!\cite[Thms 2.1, 4.3 and 4.2]{BL2000})}
If $L_U$ is a maximum scattered $\F_q$-linear set of $PG(r-1,q^t)$
of rank $k$, then

$$k=\frac{rt}{2} \,\,\,\,\,\,\, \mbox{if} \,\, r \,  \mbox{is even},$$
$$\frac{rt-t}{2}\leq k\leq \frac{rt}{2} \,\,\,\,\,\,\, \mbox{if} \,\, r \,\,  \mbox{is odd}.$$
Also, if $rt$ is even and  $L_U$ is a maximum scattered
$\F_q$-linear set of $PG(r-1,q^t)$ of rank $\frac{rt}{2}$, then
$L_U$ is a two-intersection set (with respect to hyperplanes)
in $PG(r-1,q^t)$ with intersection numbers
$\theta_{\frac{rt}2-t-1}(q)=\frac{q^{\frac{rt}2-t}-1}{q-1}$ and $\theta_{\frac{rt}2-t}(q)=\frac{q^{\frac{rt}2-t+1}}{q-1}$.
\end{theorem}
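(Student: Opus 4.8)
The plan is to split the statement into three parts: the universal upper bound $k\le\frac{rt}{2}$ valid for every scattered set, the exact value for $r$ even together with the lower bound for $r$ odd (which are existence statements), and finally the two-intersection property. Throughout I identify $L_U$ with its defining $\mathbb{F}_q$-subspace $U\subseteq V$, where $\dim_{\mathbb{F}_q}V=rt$, and I use the following reformulation of scatteredness: a point $\langle v\rangle_{\mathbb{F}_{q^t}}$ has weight $\ge 2$ exactly when there exist $u\in U\setminus\{0\}$ and $\lambda\in\mathbb{F}_{q^t}\setminus\mathbb{F}_q$ with $\lambda u\in U$. Hence $L_U$ is scattered if and only if $U\cap\lambda U=\{0\}$ for every $\lambda\in\mathbb{F}_{q^t}\setminus\mathbb{F}_q$.

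For the upper bound I fix any $\omega\in\mathbb{F}_{q^t}\setminus\mathbb{F}_q$ and observe that $\omega U$ is again an $\mathbb{F}_q$-subspace of dimension $k$; scatteredness gives $U\cap\omega U=\{0\}$, so $U\oplus\omega U\subseteq V$ forces $2k\le rt$, that is $k\le\frac{rt}{2}$. To attain this value when $r$ is even I would start from the rank-$t$ space $U_0=\{(x,x^q):x\in\mathbb{F}_{q^t}\}\subseteq\mathbb{F}_{q^t}^2$, which is scattered because $\lambda(x,x^q)\in U_0$ forces $\lambda x^q=(\lambda x)^q$ and hence $\lambda\in\mathbb{F}_q$; decomposing $V=V_1\oplus\cdots\oplus V_{r/2}$ into $\mathbb{F}_{q^t}$-planes and placing a copy of $U_0$ in each $V_i$ yields $U=\bigoplus_i U_i$ with $U\cap\lambda U=\bigoplus_i(U_i\cap\lambda U_i)=\{0\}$, so $U$ is scattered of rank $\frac{rt}{2}$. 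When $r$ is odd, $r-1$ is even, so a hyperplane $PG(r-2,q^t)$ already carries a scattered linear set of rank $\frac{(r-1)t}{2}$; since scatteredness is inherited by the ambient space, this gives the lower bound $k\ge\frac{(r-1)t}{2}$.

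For the two-intersection statement assume now $k=\frac{rt}{2}$ exactly, so that the inclusion above becomes an equality $V=U\oplus\lambda U$ for every $\lambda\in\mathbb{F}_{q^t}\setminus\mathbb{F}_q$. A hyperplane $\Omega=PG(W,\mathbb{F}_{q^t})$ corresponds to an $\mathbb{F}_{q^t}$-hyperplane $W$ with $\dim_{\mathbb{F}_q}W=(r-1)t$, its weight is $w=\dim_{\mathbb{F}_q}(U\cap W)$, and $|L_U\cap\Omega|=\theta_{w-1}(q)$ because the restriction of a scattered set stays scattered. Writing $\pi\colon V\to V/W\cong\mathbb{F}_{q^t}$ for the $\mathbb{F}_{q^t}$-linear quotient map, one has $w=\frac{rt}{2}-\dim_{\mathbb{F}_q}\pi(U)$; applying $\pi$ to $V=U\oplus\lambda U$ and using $\pi(\lambda U)=\lambda\pi(U)$ gives $A+\lambda A=\mathbb{F}_{q^t}$ for $A:=\pi(U)$ and every $\lambda\notin\mathbb{F}_q$. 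The whole statement thus reduces to showing $\dim_{\mathbb{F}_q}A\ge t-1$, which immediately yields $w\in\{\frac{rt}{2}-t,\frac{rt}{2}-t+1\}$ and hence the two announced intersection numbers $\theta_{\frac{rt}{2}-t-1}(q)$ and $\theta_{\frac{rt}{2}-t}(q)$.

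The heart of the argument, and the step I expect to be the main obstacle, is this dimension bound on $A$. Setting $d=\dim_{\mathbb{F}_q}A$, the condition $A+\lambda A=\mathbb{F}_{q^t}$ forces $\dim_{\mathbb{F}_q}(A\cap\lambda A)=2d-t$ for every $\lambda\in\mathbb{F}_{q^t}\setminus\mathbb{F}_q$. I would then evaluate $\sum_{\lambda\in\mathbb{F}_{q^t}^{*}}(|A\cap\lambda A|-1)$ in two ways: counting pairs $(a,\lambda)$ with $a\in A\setminus\{0\}$ and $\lambda^{-1}a\in A\setminus\{0\}$ gives $(q^{d}-1)^2$, while splitting the sum over $\lambda\in\mathbb{F}_q^{*}$ (where $\lambda A=A$) and over $\lambda\notin\mathbb{F}_q$ (where the weight is the constant $2d-t$) gives $(q-1)(q^{d}-1)+(q^{t}-q)(q^{2d-t}-1)$. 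Equating and simplifying reduces to $q^{d}(q+1)=q^{2d-t+1}+q^{t}$, which holds for $d\in\{t-1,t\}$ but fails for $d\le t-2$, since then $q^{d+1}+q^{d}<q^{t}$ for $q\ge 2$. This forces $\dim_{\mathbb{F}_q}A\ge t-1$, as needed. A final remark is that both weights are actually realized, so that $L_U$ is a genuine two-intersection set; this I would confirm by a global incidence count over all hyperplanes, which excludes the degenerate possibility that every hyperplane meets $L_U$ in the same number of points.
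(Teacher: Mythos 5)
Your proposal is correct in substance, but there is nothing inside the paper to compare it with: Theorem \ref{thm:BL} is not proved there, it is quoted from Blokhuis and Lavrauw \cite{BL2000}. Measured against the machinery the paper does develop, most of your argument is that same toolkit assembled independently. Your criterion ``$L_U$ is scattered iff $U\cap\lambda U=\{0\}$ for all $\lambda\in\F_{q^t}\setminus\F_q$'' is the weight criterion of Propositions \ref{prop:1}--\ref{prop:2}; the bound $2k\le rt$ from $U\oplus\omega U\subseteq V$ is the standard argument; and your construction for $r$ even --- a copy of $\{(x,x^q):x\in\F_{q^t}\}$ in each plane of an $\F_{q^t}$-decomposition of $V$ --- is exactly the paper's construction of type $(C1)$, i.e.\ Theorem \ref{prop:generaliz} applied to the rank-$t$ pseudoregulus-type set on a line; embedding such a set in a hyperplane then gives the odd-$r$ lower bound, which is legitimate under the paper's definition of ``maximum'' (highest possible rank). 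The genuinely personal part is your treatment of the two-intersection property, and it is correct: the double count does reduce to $q^d(q+1)=q^t+q^{2d-t+1}$, whose only solutions with $t/2\le d\le t$ are $d\in\{t-1,t\}$. It is, however, heavier than necessary. From $A+\lambda A=\F_{q^t}$ for all $\lambda\notin\F_q$ you can dualize with respect to the trace form --- the same device as the paper's Lemma \ref{lem2}: since $(A+\lambda A)^\perp=A^\perp\cap\lambda^{-1}A^\perp$, the hypothesis says $A^\perp\cap\mu A^\perp=\{0\}$ for every $\mu\notin\F_q$; if $\dim_{\F_q}A^\perp\ge 2$, any two $\F_q$-independent $b_1,b_2\in A^\perp$ give $\mu=b_1/b_2\notin\F_q$ and $0\ne b_1\in A^\perp\cap\mu A^\perp$, a contradiction. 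So $\dim_{\F_q}A\ge t-1$ in two lines, with no counting at all.

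The one soft spot is your closing sentence: that \emph{both} intersection numbers are actually attained is asserted, not proved. It does follow from the incidence count you name, but you should carry it out: summing $|L_U\cap H|$ over all hyperplanes $H$ gives $|L_U|\cdot\frac{q^{t(r-1)}-1}{q^t-1}$, and neither candidate constant value is compatible with this total. For instance, if every hyperplane had weight $\frac{rt}{2}-t$, one would need $(q^{rt/2}-1)(q^{t(r-1)}-1)=(q^{rt/2-t}-1)(q^{rt}-1)$, which fails because the left side exceeds the right by $(q^t-1)(q^{rt-t}-q^{rt/2-t})>0$; the other constant value fails by a symmetric computation. As written this step is a gap, though an easily fillable one.
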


When $r$ is even there always exists an $\F_q$--scattered linear set of rank
$\frac{rt}2$ in $PG(r-1,q^t)$ (see \cite[Theorem 2.5.5]{LPhdThesis} for an explicit
example) whereas, when $r$ is {\bf odd}, the upper bound $\frac{rt}2$ is attained in the following cases:
\begin{itemize}
\item $r=3$, $t=2$ (Baer subplanes),
\item $r=3$, $t=4$ (\!\!\cite[Section 3]{BBL2000}),
\item $r> 3$, $t=2$, $q=2$ (\!\!\cite[Thm. 4.4]{BL2000}),
\item $r\geq 3$, $(t-1)|r$ ($t$ even), $q>2$ (\!\!\cite[Thm. 4.4]{BL2000}).
\end{itemize}
This means that, for a given value of $r$, examples of maximum scattered linear sets have been shown to exist only for a small number of $t$'s.
It should be also noted that, differently from what happens for
$r$ even, in the case $r$ odd the proof of Theorem 4.4 in \cite{BL2000} shows the
existence of such maximum scattered linear sets without giving
explicit examples.

In the first part of this paper we construct three different families of scattered
$\F_q$--linear sets in $PG(2,q^t)$, $t\geq 4$ even, of rank $\frac{3t}{2}$, for infinite values of the prime power $q$. This allows us to produce for each integer $r\geq 5$, scattered
$\F_q$--linear sets in $PG(r-1,q^{t})$ of rank $\frac{rt}2$ ($t$ even). More precisely we show that

\begin{theorem}\label{mainthm}
There exist examples of scattered $\F_q$--linear sets in $PG(r-1,q^{t})$, $t$ even, of rank $\frac{rt}2$ in the following cases:
\begin{itemize}
\item $q=2$ and $t\geq 4$;
\item $q\geq 2$ and $t\not\equiv  0 \bmod 3$;
\item $q\equiv 1 \bmod 3$ and $t\equiv 0 \bmod 3$.
\end{itemize}
\end{theorem}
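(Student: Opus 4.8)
The plan is to reduce the statement for general $r$ to the single case $r=3$, and then to build that base case by an explicit construction in $\PG(2,q^t)$. For $r$ even the existence of a maximum scattered $\F_q$-linear set of rank $\frac{rt}{2}$ is already unconditional (this is recalled just after Theorem~\ref{thm:BL}), so the real content is for odd $r$. For the reduction, write $t=2s$ and note that $\{(x,x^q):x\in\F_{q^t}\}$ defines a scattered $\F_q$-linear set of $\PG(1,q^t)$ of rank $t$: if $\lambda(a,a^q)=(x,x^q)$ then $x=\lambda a$ and $(\lambda a)^q=\lambda a^q$, forcing $\lambda^q=\lambda$, i.e. $\lambda\in\F_q$, so every point has weight $1$. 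Given a maximum scattered $U_0\subseteq\F_{q^t}^3$ of $\F_q$-dimension $3s$ for $r=3$, I would set, for $r=3+2m$,
$$U=U_0\ \oplus\ \bigoplus_{i=1}^{m}\{(x_i,x_i^q):x_i\in\F_{q^t}\}\ \subseteq\ \F_{q^t}^{\,3+2m},$$
placing each summand in its own block of coordinates. A direct sum of scattered subspaces supported on independent coordinate blocks is again scattered: if $\mathbf{u}=(\mathbf{u}_0,\mathbf{u}_1,\dots)$ and $\lambda\mathbf{u}\in U$, then $\lambda\mathbf{u}_i\in U_i$ for each $i$, and any nonzero block forces $\lambda\in\F_q$. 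The ranks add up to $3s+mt=\frac{(3+2m)t}{2}=\frac{rt}{2}$, exactly as required, so this step is routine once the base case is in hand.

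The heart of the argument is therefore the $r=3$ construction. Here $V=\F_{q^t}^3$ has $\F_q$-dimension $3t$, and I need an $\F_q$-subspace $U$ of dimension $3s=\frac{3t}{2}$ all of whose points have weight $1$. I would look for $U$ of the shape
$$U=\{(x,\,y,\,\varphi(x)+\psi(y)):x\in\F_{q^t},\ y\in\F_{q^{s}}\},$$
with $\varphi,\psi$ suitable $\F_q$-linear (linearized) maps. Since the first two coordinates already determine $(x,y)$, the map is injective and $\dim_{\F_q}U=t+s=3s$, giving the correct rank; the three families of Theorem~\ref{mainthm} would correspond to three choices of $(\varphi,\psi)$ tuned to the arithmetic of $q$ and $t$. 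The scattered condition unwinds as follows: assume $\mathbf{u}=(a,b,\varphi(a)+\psi(b))\neq\mathbf{0}$ and $\lambda\mathbf{u}\in U$; then $\lambda a,\lambda b$ must again be admissible first and second coordinates (in particular $\lambda b\in\F_{q^s}$), and matching the third coordinate yields the identity $\varphi(\lambda a)+\psi(\lambda b)=\lambda\bigl(\varphi(a)+\psi(b)\bigr)$. The objective is to prove that these constraints can hold only when $\lambda\in\F_q$.

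The main obstacle is precisely this last implication, and it is where the congruence hypotheses enter. After clearing the linearized maps, the membership condition becomes a polynomial equation in $\lambda$ over $\F_{q^t}$ whose ``parasitic'' solutions $\lambda\notin\F_q$ are governed by whether certain elements are cubes and by the splitting of an auxiliary polynomial of the form $X^3-c$. The presence of a primitive cube root of unity, controlled by $q\equiv 1\bmod 3$, and the divisibility $3\mid t$ decide whether such solutions exist; this is what splits the analysis into the regime $t\not\equiv 0\bmod 3$, where the relevant cubing map is injective and no parasitic $\lambda$ survives for any $q$, and the regime $t\equiv 0\bmod 3$ with $q\equiv 1\bmod 3$, where a compensating choice of coefficients eliminates them. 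The case $q=2$ is handled by a separate family because the characteristic-$2$ algebra simplifies enough that one construction works uniformly for all even $t\geq 4$. I expect the bulk of the technical effort to lie in bounding the number of solutions of these associated equations---equivalently, in controlling the intersections of the corresponding linearized-polynomial graphs---uniformly across the infinite family, thereby confirming both that $U$ has the claimed dimension and that it is scattered in exactly the three stated regimes.
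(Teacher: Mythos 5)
Your reduction of the general case to $r=3$ is correct and coincides with what the paper does: its Theorem \ref{prop:generaliz} shows that a direct sum of scattered subspaces supported on $\F_{q^t}$-independent blocks is again scattered, and the proof of Theorem \ref{mainthm} decomposes $V$ into one $3$-dimensional block carrying a rank-$\frac{3t}{2}$ scattered set and $2$-dimensional blocks carrying rank-$t$ scattered sets such as your $\{(x,x^q)\}$. So that part stands; the whole weight of the theorem rests on the planar base case, and there your Ansatz is not merely unfinished --- it cannot work for any choice of the maps.

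Indeed, for $t\geq 4$ even and $s=t/2\geq 2$, \emph{no} $\F_q$-linear maps $\varphi:\F_{q^t}\to\F_{q^t}$ and $\psi:\F_{q^s}\to\F_{q^t}$ make
$$U=\{(x,\,y,\,\varphi(x)+\psi(y)):x\in\F_{q^t},\ y\in\F_{q^{s}}\}$$
scattered. Fix $\lambda\in\F_{q^s}\setminus\F_q$ (such $\lambda$ exists precisely because $s\geq 2$) and put $\Phi_\lambda(x)=\varphi(\lambda x)-\lambda\varphi(x)$ and $\Psi_\lambda(y)=\psi(\lambda y)-\lambda\psi(y)$. Applying your scattered condition to the vectors $(a,0,\varphi(a))$ forces $\Phi_\lambda$ to have trivial kernel, hence to be a bijection of $\F_{q^t}$; applying it to $(0,b,\psi(b))$ (legitimate, since $\lambda b\in\F_{q^s}$) forces $\Psi_\lambda(b)\neq 0$ for all $b\in\F_{q^s}^*$. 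But then, taking any $b\in\F_{q^s}^*$ and solving $\Phi_\lambda(a)=-\Psi_\lambda(b)$ (the solution $a$ is nonzero because $\Psi_\lambda(b)\neq 0$), the nonzero vector $\mathbf{u}=(a,b,\varphi(a)+\psi(b))$ satisfies $\lambda\mathbf{u}\in U$ with $\lambda\notin\F_q$, so the point $\langle \mathbf{u}\rangle_{\F_{q^t}}$ has weight at least $2$. Geometrically the obstruction is visible at once: in your $U$ the line $X_2=0$ has weight $t$, whereas by Theorem \ref{thm:BL} a scattered linear set of rank $\frac{3t}{2}$ in $PG(2,q^t)$ is a two-intersection set with respect to lines, so its lines can only have weight $t/2$ or $t/2+1$, both smaller than $t$ once $t\geq 4$. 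This is exactly why the paper does not split the parameter space as $\F_{q^t}\times\F_{q^s}$: it identifies $V$ with the field $\F_{q^{6n}}$ (where $t=2n$) and takes $U=\{f(x)+x\omega:\ x\in\F_{q^{3n}}\}$ with $\omega\in\F_{q^{2n}}\setminus\F_{q^n}$, parameterized by the single field $\F_{q^{3n}}$ (Propositions \ref{prop:1} and \ref{prop:2}). Scatteredness is then reduced to the unsolvability of norm-type equations such as $z^{q^i-1}=-A/a^{q^{2n}+1}$ over $\F_{q^{3n}}$ for the monomial families $f(x)=ax^{q^i}$ (Theorems \ref{thm:family(1)} and \ref{thm:family(2)}), and, for $q=2$, to a value-set and curve-counting argument for a binomial $f$ (Theorem \ref{thm:family(3)}); the three arithmetic cases in the statement arise from those norm conditions, not from a polynomial $X^3-c$ in the membership variable $\lambda$ as you conjecture.
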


In the second part of the paper we point out the relationship between maximum scattered linear sets and \textcolor{black}{complete} caps in affine spaces over finite fields of even characteristic. \textcolor{black}{A cap in an affine or projective Galois space is a set of points no three of which collinear; a cap which is maximal with respect to set-theoretical inclusion is said to be complete}.
A \textcolor{black}{long-standing} issue in Finite Geometry is to ask for explicit constructions of small complete  caps in Galois spaces. The trivial lower bound for the size of a complete cap in \textcolor{black}{a Galois space of dimension $n$ and order $q$} is
\begin{equation}\label{TrivialLB}
\textcolor{black}{\sqrt 2  \cdot \sqrt q^{n-1}}.
\end{equation}
If $q$ is even and $n$ is odd, such bound is substantially sharp: \textcolor{black}{the existence of a complete cap of size
$3q+2$ in $PG(3,q)$ was showed by Segre \cite{Segre}, whose construction was later generalized by Pambianco and Storme \cite{Leo} to complete caps of size
$ 2q^{s}$ in $AG(2s+1,q)$}.
%
%
%
Otherwise, all known infinite families of complete caps have size far from \eqref{TrivialLB}; see the survey paper \cite{GiuliettiBCC}.
\textcolor{black}{Here we prove that \eqref{TrivialLB} is essentially sharp also when $n\ge 4$ is even, provided that $q$ is an even square}.
\begin{theorem}\label{MainCaps}
Let $q=2^t$, $t$ even, and $n\geq 4$ even. Then there exists a complete cap in $AG(n,q)$ of size $2\sqrt q^{n-1}.$
\end{theorem}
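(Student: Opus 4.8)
The plan is to obtain the required cap by applying a doubling construction to a maximum scattered linear set living one dimension lower. Write $q=2^t$ and set $r=n-1$, which is odd since $n$ is even. By Theorem \ref{mainthm} applied over the prime field $\F_2$ (so that the ambient space is $PG(r-1,2^t)=PG(r-1,q)$), together with the cases $t=2$ recorded before that theorem (Baer subplanes for $r=3$ and \cite[Thm.~4.4]{BL2000} for $r>3$), there is a maximum scattered $\F_2$--linear set $L_U$ of rank $\frac{rt}2$ in $PG(r-1,q)$, defined by an $\F_2$--subspace $U\subseteq V:=\F_q^{r}$ with $\dim_{\F_2}U=\frac{rt}2$, so that $|U|=2^{rt/2}=\sqrt q^{\,r}=\sqrt q^{\,n-1}$.

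First I would check that $U$, viewed as a set of points of $AG(r,q)$, is a cap. If three distinct $u_1,u_2,u_3\in U$ were collinear then $u_3-u_1=\lambda(u_2-u_1)$ for some $\lambda\in\F_q$; since the characteristic is $2$, both $u_3+u_1$ and $u_2+u_1$ lie in $U\setminus\{\mathbf 0\}$ and are $\F_q$--proportional, forcing a point of $L_U$ of weight $\ge 2$ unless $\lambda\in\F_2$, i.e. unless two of the $u_i$ coincide. Thus no three points of $U$ are collinear. For completeness I would argue by contradiction: if some $p\in V\setminus U$ lay on no secant of $U$, then for every $u\in U$ one has $\langle p+u\rangle_{\F_q}\cap U=\{\mathbf 0\}$, and a short case analysis shows that $U+\langle p\rangle_{\F_2}$ is again scattered, now of rank $\frac{rt}2+1$. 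This contradicts the upper bound $k\le\frac{rt}2$ of Theorem \ref{thm:BL}. Hence $U$ is a complete cap of $AG(r,q)$ of size $\sqrt q^{\,n-1}$, invariant under the translation group $U$.

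Next comes the doubling. Embed $AG(r,q)$ as the hyperplane of height $0$ in $AG(r+1,q)=AG(n,q)$, identify the new coordinate with $\F_q$, and put
$$\widetilde U=(U\times\{0\})\cup(U\times\{1\})=U\times\F_2\subseteq\F_q^{r+1}.$$
Its size is $2|U|=2\sqrt q^{\,n-1}$, the claimed value. That $\widetilde U$ is a cap is immediate: a line lying in the hyperplane of height $0$ or of height $1$ meets $\widetilde U$ in at most two points because $U$ is a cap, while any other line is transversal, meeting each of these two hyperplanes once and hence $\widetilde U$ in at most two points. For completeness, a point of height $0$ or $1$ outside $\widetilde U$ is covered by a secant contained in its own hyperplane, because $U$ is already complete there. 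The essential case is the remaining heights: for $h\in\F_q\setminus\F_2$ a transversal line through $(u_0,0)$ and $(u_1,1)$ reaches height $h$ at the point $\bigl((1+h)u_0+hu_1,\,h\bigr)$, so a point $(p,h)$ is covered as soon as $p\in(1+h)U+hU$.

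Here the maximality of the rank enters decisively, and this is where I expect the main work to lie. Using that $L_U$ is scattered, a nonzero common vector of $(1+h)U$ and $hU$ would produce $u,u'\in U$ with $u'\in\langle u\rangle_{\F_q}\cap U=\{\mathbf 0,u\}$, which forces $u=u'=\mathbf 0$; hence $(1+h)U\cap hU=\{\mathbf 0\}$. Since scaling by the nonzero elements $1+h$ and $h$ preserves $\F_2$--dimension, $\dim_{\F_2}\bigl((1+h)U+hU\bigr)=\frac{rt}2+\frac{rt}2=rt=\dim_{\F_2}V$, so $(1+h)U+hU=V$ and every point of height $h$ is covered. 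Therefore $\widetilde U$ is a complete cap of $AG(n,q)$ of size $2\sqrt q^{\,n-1}$. Both this covering at generic heights and the completeness of the base cap rest on the rank being \emph{exactly} $\frac{rt}2$: the dimension count $\frac{rt}2+\frac{rt}2=rt$ fails for smaller rank, and the base completeness uses the impossibility of a scattered set of rank $\frac{rt}2+1$. The existence of such maximal--rank scattered sets in the odd dimension $r=n-1$ is precisely what Theorem \ref{mainthm} supplies.
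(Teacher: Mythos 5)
Your proposal is correct, and its skeleton is the same as the paper's (Proposition \ref{MainPropCaps}): a maximum scattered $\F_2$--linear set of rank $\frac{(n-1)t}{2}$ gives a translation cap of size $\sqrt q^{\,n-1}$ in $AG(n-1,q)$, which is then doubled to a complete cap in $AG(n,q)$. The execution, however, differs in two genuine ways. First, the paper never invokes Theorem \ref{mainthm} in dimension $n-1$: it takes a maximum scattered set only in the plane $PG(2,q)$, converts it to a cap in $AG(3,q)$ via Proposition \ref{Theorem:correspondance}, and reaches $AG(n-1,q)$ by taking products with copies of the parabola cap $\{(x,x^2):x\in\F_q\}$, citing \cite[Proposition 2.8]{Giulietti2007}; your route through Theorem \ref{mainthm} is equivalent in substance, since that theorem is itself proved by the direct--sum decomposition of Theorem \ref{prop:generaliz}, which is the linear--set counterpart of the product lemma for caps. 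Second, and more substantially, the paper outsources all completeness claims to the doubling result \cite[Corollary 2.12]{Giulietti2007}, whereas you prove them: an uncovered point $p$ of $AG(n-1,q)$ would make $U\oplus\langle p\rangle_{\F_2}$ scattered of rank $\frac{(n-1)t}{2}+1$, contradicting Theorem \ref{thm:BL}, and a point at height $h\notin\F_2$ is covered because scatteredness gives $(1+h)U\cap hU=\{\mathbf 0\}$, so maximality of the rank forces $(1+h)U+hU$ to have full $\F_2$--dimension $(n-1)t$. I checked both case analyses and they are sound, so your argument is a self-contained proof of exactly the facts the paper cites; the paper's version buys brevity and an explicit product structure for the cap, while yours buys independence from \cite{Giulietti2007} plus the extra observation that the maximal translation cap is already complete in the odd-dimensional space $AG(n-1,q)$. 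Finally, your separate treatment of $t=2$ (Baer subplanes for $r=3$, and \cite[Thm. 4.4]{BL2000} for $r>3$) is not redundant caution: the constructions underlying Theorem \ref{mainthm} require $t\geq 4$, so that case genuinely needs the independent references.
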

\textcolor{black}{Theorem \ref{MainCaps} relies on the fact that
$\F_2$--linear sets in $PG(r-1,2^t)$ of maximal rank $\frac{rt}2$, for $t$ even and $r$ odd,
 naturally correspond to complete caps in $AG(r,2^t)$ fixed by a translation group of size $2^{\frac{rt}2}$. Then the
  scattered $\F_2$--linear sets of maximal rank described in this paper, together with
the doubling construction for translation caps as described in \cite{Giulietti2007}, provide complete caps in
$AG(r+1,q)$ of size $2q^{\frac{r}{2}}$, for $q=2^t$.} \textcolor{black}{We point out that complete caps in the projective space $PG(r+1,q)$ with size of the same order of magnitude can also be constructed (see Remark \ref{completamento}).}

\section{Constructions of maximum scattered linear sets in $PG(2,q^{2n})$}
In this section we want to construct infinite families of
scattered $\F_q$--linear sets of rank $3n$ in the projective
plane $PG(2,q^{2n})$, with $n\geq 2$. Note that, by Theorem \ref{thm:BL}, such scattered linear sets are two intersection sets (with respect to the lines) of the plane.

Consider the finite field $\F_{q^{6n}}$ as a 3--dimensional vector
space over its subfield $\F_{q^{2n}}$, $n\geq 2$, and let
$\P=PG(\F_{q^{6n}},\F_{q^{2n}})=PG(2,q^{2n})$ be the associated projective plane.

The following proposition can be easily verified.

\begin{prop}\label{prop:1}
Let $f:\F_{q^{3n}}\rightarrow\F_{q^{3n}}$ be an $\F_q$--linear map, $\omega$ an element of $\F_{q^{2n}}\setminus \F_{q^n}$ and consider the
subset of $\F_{q^{6n}}$
$$U=\{f(x)+x\omega:\
x\in\F_{q^{3n}}\}$$
Then, the set \begin{equation}\label{form:scatt}
L_U=\{\langle f(x)+x\omega\rangle_{\F_{q^{2n}}}:\
x\in\F_{q^{3n}}^*\}\end{equation} is an $\F_q$--linear of rank $3n$ of the
projective plane $\P=PG(2,q^{2n})$. Also, put $$Q_f:=\Bigg\{\frac{f(x)+x\omega}{f(y)+y\omega}:\ x,y\in\F_{q^{3n}},\, y\ne 0\Bigg\},$$ the set $L_U$ turns out to be scattered
 if and only if $Q_f\cap\F_{q^{2n}}=\F_q$.
\end{prop}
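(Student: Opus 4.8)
The plan is to verify the three assertions in turn: that $U$ is an $\F_q$--subspace, that it has $\F_q$--dimension $3n$ (so that $L_U$ has rank $3n$), and finally the scattered criterion. First I would check $\F_q$--linearity of $U$: since $f$ is $\F_q$--linear and $\omega$ is a fixed scalar, for $a,b\in\F_q$ and $x,y\in\F_{q^{3n}}$ one has $a(f(x)+x\omega)+b(f(y)+y\omega)=f(ax+by)+(ax+by)\omega\in U$, so $U$ is an $\F_q$--subspace of $\F_{q^{6n}}$. To compute its dimension I would show the $\F_q$--linear surjection $x\mapsto f(x)+x\omega$ from $\F_{q^{3n}}$ onto $U$ is injective: if $f(x)+x\omega=0$ with $x\ne 0$, then $\omega=-f(x)/x\in\F_{q^{3n}}$, and combined with $\omega\in\F_{q^{2n}}$ this forces $\omega\in\F_{q^{2n}}\cap\F_{q^{3n}}=\F_{q^{\gcd(2n,3n)}}=\F_{q^n}$, contradicting $\omega\notin\F_{q^n}$. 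Hence $\dim_{\F_q}U=3n$ and $L_U$ has rank $3n$; this also shows $f(x)+x\omega\ne 0$ for $x\ne 0$, so the description of $L_U$ in \eqref{form:scatt} is consistent.

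For the scattered criterion I would translate the weight condition into a statement about ratios. A point $\langle v\rangle_{\F_{q^{2n}}}$ of $L_U$, with $v\in U\setminus\{0\}$, has weight $\dim_{\F_q}\!\big(U\cap\langle v\rangle_{\F_{q^{2n}}}\big)$, which is always at least $1$; it equals $1$ exactly when $U\cap\langle v\rangle_{\F_{q^{2n}}}=\F_q v$. Thus $L_U$ is scattered precisely when, for all $u,v\in U$ with $v\ne 0$, the relation $u=\mu v$ with $\mu\in\F_{q^{2n}}$ forces $\mu\in\F_q$. Since $Q_f=\{u/v:u,v\in U,\ v\ne 0\}$ is exactly the set of such ratios, this reads $Q_f\cap\F_{q^{2n}}\subseteq\F_q$.

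Finally I would observe that the reverse inclusion is automatic: taking $x=cy$ with $c\in\F_q$ gives $(f(x)+x\omega)/(f(y)+y\omega)=c$, so $\F_q\subseteq Q_f\cap\F_{q^{2n}}$. Combining the two inclusions yields the equivalence $L_U$ scattered $\iff Q_f\cap\F_{q^{2n}}=\F_q$. The individual steps are routine; the only point demanding care is the dimension count, where the arithmetic of the subfield lattice---specifically $\F_{q^{2n}}\cap\F_{q^{3n}}=\F_{q^n}$ together with the hypothesis $\omega\notin\F_{q^n}$---is precisely what guarantees both the injectivity of $x\mapsto f(x)+x\omega$ and, in the scattered direction, that a ratio $\mu\in\F_{q^{2n}}\setminus\F_q$ genuinely witnesses a point of weight at least $2$.
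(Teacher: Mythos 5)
Your proposal is correct and follows essentially the same route as the paper: verify $U$ is an $\F_q$--subspace via linearity of $f$, get the rank $3n$ from the fact that $\omega\notin\F_{q^{3n}}$ (the paper phrases this as $\{1,\omega\}$ being an $\F_{q^{3n}}$--basis of $\F_{q^{6n}}$, you as injectivity of $x\mapsto f(x)+x\omega$ via $\F_{q^{2n}}\cap\F_{q^{3n}}=\F_{q^n}$), and translate ``some point has weight $>1$'' into the existence of a ratio $(f(x)+x\omega)/(f(y)+y\omega)$ lying in $\F_{q^{2n}}\setminus\F_q$. Your write-up is merely more explicit than the paper's (which leaves the rank count and the trivial inclusion $\F_q\subseteq Q_f\cap\F_{q^{2n}}$ implicit), so there is nothing further to add.
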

\begin{proof} We first observe that $\{1,\omega\}$ is an $\F_{q^n}$--basis of $\F_{q^{2n}}$ and an
$\F_{q^{3n}}$--basis of $\F_{q^{6n}}$, as well. Also, since $f$ is an $\F_q$--linear map,
the subset $U=\{f(x)+x\omega:\ x\in\F_{q^{3n}}\}$ of $\F_{q^{6n}}$ is closed under
addition and $\F_q$--scalar multiplication, and hence it is an $\F_q$--vector subspace of $\F_{q^{6n}}$. This means that the set $L_U$
turns out to be an $\F_q$--linear set of rank $3n$ of the
plane $\P$. Also, $L_U$ is not scattered if and only if there exists a point $P_x:=\langle f(x)+x\omega\rangle_{\F_{q^{2n}}}$ of $L_U$, with $x\in\F_{q^{3n}}^*$, having weight grater
than 1, and hence there exist $y\in\F_{q^{3n}}^*$ and $\lambda\in\F_{q^{2n}}\setminus\F_q$
such that
\begin{equation}\label{form:peso}
f(x)+x\omega=\lambda(f(y)+y\omega).
\end{equation} The assertion follows.
\end{proof}

\medskip

\noindent Let now
\begin{equation}\label{form:w2}
\omega^2=A+B\omega,\end{equation} with $A,B\in\F_{q^n}$ and $A\ne
0$, and suppose that there exist $x,y\in\F_{q^{3n}}^*$ and $\lambda\in\F_{q^{2n}}\setminus\F_{q}$ satisfying Equation (\ref{form:peso}). Such an equation implies that
$$\Bigg(\frac{f(x)+x\omega}{f(y)+y\omega}\Bigg)^{q^{2n}}=\frac{f(x)+x\omega}{f(y)+y\omega},$$
i.e., taking (\ref{form:w2}) into account, we get
$$f(x)^{q^{2n}}f(y)+x^{q^{2n}}yA+(f(x)^{q^{2n}}y+f(y)x^{q^{2n}}+x^{q^{2n}}yB)\omega=$$
$$=f(y)^{q^{2n}}f(x)+y^{q^{2n}}xA+(f(y)^{q^{2n}}x+f(x)y^{q^{2n}}+y^{q^{2n}}xB)\omega.$$
Since $\{1,\omega\}$ is an $\F_{q^{3n}}$--basis of $\F_{q^{6n}}$,
the above equality is equivalent to
$$\left\{\begin{array}{l}
f(x)^{q^{2n}}f(y)-f(y)^{q^{2n}}f(x)=(xy^{q^{2n}}-yx^{q^{2n}})A\\
f(x)^{q^{2n}}y+f(y)x^{q^{2n}}-f(y)^{q^{2n}}x-f(x)y^{q^{2n}}=(xy^{q^{2n}}-yx^{q^{2n}})B
\end{array}\right..$$

\medskip

The previous arguments allow us to reformulate the previous proposition in the following way which will be useful in the sequel.
\begin{prop}\label{prop:2}
Let $f:\F_{q^{3n}}\rightarrow\F_{q^{3n}}$ be an $\F_q$--linear map and $\omega$ an element of $\F_{q^{2n}}\setminus \F_{q^n}$ such that $\omega^2=A+B\omega$, with $A,B\in\F_{q^n}$ and $A\ne
0$. The set $$
L_U=\{\langle f(x)+x\omega\rangle_{\F_{q^{2n}}}:\
x\in\F_{q^{3n}}^*\}$$ turns out to be a scattered $\F_q$--linear of rank $3n$ of the
projective plane $\P=PG(2,q^{2n})$ if and only if for each pair
$(x,y)\in\F_{q^{3n}}^*\times \F_{q^{3n}}^*$ satisfying the following equations
\begin{equation}\label{form:1}
f(x)^{q^{2n}}f(y)-f(y)^{q^{2n}}f(x)=(xy^{q^{2n}}-yx^{q^{2n}})A
\end{equation}
\begin{equation}\label{form:2}
f(x)^{q^{2n}}y+f(y)x^{q^{2n}}-f(y)^{q^{2n}}x-f(x)y^{q^{2n}}=(xy^{q^{2n}}-yx^{q^{2n}})B,
\end{equation}
the quotient
\begin{equation}\label{form:peso1}\lambda:=\frac{f(x)+x\omega}{f(y)+y\omega}
\end{equation} is an element of $\F_q^*$.  
\end{prop}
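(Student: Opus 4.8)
The plan is to deduce this reformulation directly from Proposition \ref{prop:1}, whose criterion reads: $L_U$ is scattered if and only if $Q_f\cap\F_{q^{2n}}=\F_q$. Unwinding this, $L_U$ \emph{fails} to be scattered precisely when there exist $x,y\in\F_{q^{3n}}^*$ for which the quotient $\lambda=\frac{f(x)+x\omega}{f(y)+y\omega}$ lies in $\F_{q^{2n}}\setminus\F_q$; equivalently, $L_U$ \emph{is} scattered exactly when, for all $x,y\in\F_{q^{3n}}^*$, the membership $\lambda\in\F_{q^{2n}}$ already forces $\lambda\in\F_q$. The whole task therefore reduces to showing that, for a pair $(x,y)\in\F_{q^{3n}}^*\times\F_{q^{3n}}^*$, the condition $\lambda\in\F_{q^{2n}}$ is \emph{equivalent} to the simultaneous validity of Equations (\ref{form:1}) and (\ref{form:2}).

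First I would record the Galois-theoretic fact underlying the computation already displayed before the statement. Since $[\F_{q^{6n}}:\F_{q^{2n}}]=3$, the map $\phi\colon z\mapsto z^{q^{2n}}$ generates $\mathrm{Gal}(\F_{q^{6n}}/\F_{q^{2n}})$, so an element of $\F_{q^{6n}}$ lies in $\F_{q^{2n}}$ if and only if it is fixed by $\phi$. Because $\omega\in\F_{q^{2n}}$ we have $\phi(\omega)=\omega$, whence $\phi(f(x)+x\omega)=f(x)^{q^{2n}}+x^{q^{2n}}\omega$. Imposing $\phi(\lambda)=\lambda$ and clearing the denominator $f(y)+y\omega$ (nonzero, as checked below) yields the cross-multiplied identity in $\F_{q^{6n}}$; substituting $\omega^2=A+B\omega$ and comparing the two coordinates in the $\F_{q^{3n}}$--basis $\{1,\omega\}$ of $\F_{q^{6n}}$ produces exactly Equations (\ref{form:1}) and (\ref{form:2}). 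The point I would stress is that each of these steps is reversible: the cross-multiplication is an equivalence because $f(y)+y\omega\ne0$, and the passage to coordinates is an equivalence because $\{1,\omega\}$ is a basis. Hence $\lambda\in\F_{q^{2n}}$ holds \emph{if and only if} $(x,y)$ satisfies (\ref{form:1}) and (\ref{form:2}).

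To finish I would verify the nonvanishing used above and upgrade $\F_q$ to $\F_q^*$. For $x\in\F_{q^{3n}}^*$ the element $f(x)+x\omega$ cannot vanish: $f(x)=-x\omega$ would express an element of $\F_{q^{3n}}$ as $-x\omega$, contradicting the $\F_{q^{3n}}$--linear independence of $\{1,\omega\}$ unless $x=0$. Thus $\lambda$ is always nonzero, so for pairs with $x,y\ne0$ the conclusions $\lambda\in\F_q$ and $\lambda\in\F_q^*$ coincide. Combining the displayed equivalence with the reformulation of Proposition \ref{prop:1} gives the assertion. I expect no serious obstacle here, since the statement is essentially a bookkeeping rewrite of Proposition \ref{prop:1}; the only genuine care required is to confirm that the coordinate-comparison step is a true biconditional rather than a one-way implication, and to negate the ``there exists a bad pair'' description of non-scatteredness correctly.
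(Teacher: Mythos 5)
Your proposal is correct and follows essentially the same route as the paper: the text preceding Proposition \ref{prop:2} derives Equations (\ref{form:1}) and (\ref{form:2}) exactly by imposing $\lambda^{q^{2n}}=\lambda$, cross-multiplying, substituting $\omega^2=A+B\omega$, and comparing coordinates in the $\F_{q^{3n}}$--basis $\{1,\omega\}$, and then states the proposition as a reformulation of Proposition \ref{prop:1}. Your only additions --- checking that $f(y)+y\omega\ne 0$ so the steps are reversible, and that $\lambda\ne 0$ so $\F_q$ can be replaced by $\F_q^*$ --- are exactly the bookkeeping the paper leaves implicit, and they are handled correctly.
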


In the sequel we will exhibit examples of $\F_q$--linear maps of $\F_{q^{3n}}$ satisfying the previous properties. In particular, we face
with the monomial and the binomial cases.

\bigskip
\subsection*{Monomial case: $f(x):=ax^{q^i}$, {\small $a\in\F_{q^{3n}}^*$ and $1\leq i\leq 3n-1$}}

\bigskip

In such a case we first show that for any value of $q\geq 2$, under suitable assumptions on $a\in\F_{q^{3n}}^*$ and on the integers $i$ and $n$, we get a scattered
$\F_q$--linear set of the projective plane $PG(2,q^{2n})$ of rank $3n$. Denoting by $N_{q^{3n}/q^3}(\cdot)$ the norm function from $\F_{q^{3n}}$ over $\F_{q^3}$, we have the following

\begin{theorem}\label{thm:family(1)}
For any prime power $q\geq 2$ and any integer $n\not\equiv
0 \bmod 3$, the set $$L_U=\{\langle
ax^{q^i}+x\omega\rangle_{\F_{q^{2n}}}:\ x\in\F_{q^{3n}}^*\}$$
satisfying the following assumptions:
\bigskip

$\begin{array}{ll}
(i) & \mbox{$gcd(i,2n)=1$ and $gcd(i,3n)=3$}\\\\
(ii) & \mbox{$N_{q^{3n}/q^3}(a)\notin\F_q$ }
\end{array}
$
\bigskip

\noindent is a scattered $\F_q$--linear set of the
projective plane $PG(2,q^{2n})$ of rank $3n$.
\end{theorem}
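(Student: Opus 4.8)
The plan is to invoke Proposition \ref{prop:2} with $f(x)=ax^{q^i}$, so that it suffices to show that every pair $(x,y)\in\F_{q^{3n}}^*\times\F_{q^{3n}}^*$ solving (\ref{form:1}) and (\ref{form:2}) has quotient $\lambda\in\F_q^*$. First I would reduce this target: since $\{1,\omega\}$ is an $\F_{q^{3n}}$--basis of $\F_{q^{6n}}$ and $f$ is $\F_q$--linear, the relation $f(x)+x\omega=\lambda(f(y)+y\omega)$ with $\lambda\in\F_q$ forces $x=\lambda y$; conversely $x/y\in\F_q^*$ yields such a $\lambda$. Hence the whole statement is equivalent to proving $x/y\in\F_q$ for every solution. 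Substituting $f(x)=ax^{q^i}$ and setting $D:=xy^{q^{2n}}-x^{q^{2n}}y$, a direct check gives $D^{q^i}=x^{q^i}y^{q^{i+2n}}-x^{q^{i+2n}}y^{q^i}$, so the bracket on the left of (\ref{form:1}) equals $-D^{q^i}$. Thus (\ref{form:1}) collapses to the compact relation $a^{q^{2n}+1}D^{q^i}=-AD$, and I would split the analysis according to whether $D=0$.

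If $D\neq0$, then $D^{q^i-1}=-A\,a^{-(q^{2n}+1)}$. Because $\gcd(i,3n)=3$, the map $\sigma\colon z\mapsto z^{q^i}$ generates $\mathrm{Gal}(\F_{q^{3n}}/\F_{q^3})$, which is cyclic of order $n$, and the displayed identity presents the constant $-A\,a^{-(q^{2n}+1)}$ as $D^{\sigma-1}$; a telescoping (Hilbert~90) argument then forces its norm $N_{q^{3n}/q^3}$ to be $1$. The crucial computation is that, since $\gcd(n,3)=1$, the exponents $3k\bmod n$ run over all residues as $k=0,\dots,n-1$, whence $N_{q^{3n}/q^3}(A)=N_{q^n/q}(A)\in\F_q$ for $A\in\F_{q^n}$, while $N_{q^{3n}/q^3}(-1)=(-1)^n\in\F_q$. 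Feeding these back gives $N_{q^{3n}/q^3}(a)^{q^{2n}+1}\in\F_q$. Writing $b:=N_{q^{3n}/q^3}(a)\in\F_{q^3}$ and using that $3\nmid n$ forces $q^{2n}\equiv q^{2n\bmod 3}\pmod{q^3-1}$ with $2n\bmod 3\in\{1,2\}$, an elementary manipulation inside $\F_{q^3}$ shows $b^{q^{2n}+1}\in\F_q$ implies $b\in\F_q$, contradicting hypothesis $(ii)$. Hence no solution with $D\neq0$ exists.

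If $D=0$, then $(x/y)^{q^{2n}}=x/y$, so $u:=x/y\in\F_{q^{2n}}\cap\F_{q^{3n}}=\F_{q^n}$. Substituting $x=uy$ into (\ref{form:2}) and using $u^{q^{2n}}=u$ annihilates the right-hand side and factors the left-hand side as $(u^{q^i}-u)\bigl(a^{q^{2n}}y^{q^{i+2n}+1}-a\,y^{q^{2n}+q^i}\bigr)$. The vanishing of the first factor gives $u\in\F_{q^3}\cap\F_{q^n}=\F_q$, as wanted. The vanishing of the second factor is equivalent to $a\,y^{q^i-1}\in\F_{q^n}$; applying $N_{q^{3n}/q^3}$, using $3\mid i$ so that $N_{q^{3n}/q^3}(y)^{q^i-1}=1$, together with $N_{q^{3n}/q^3}(w)=N_{q^n/q}(w)\in\F_q$ for $w\in\F_{q^n}$, again forces $N_{q^{3n}/q^3}(a)\in\F_q$, contradicting $(ii)$. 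Therefore in every case $x/y\in\F_q$, and by Proposition \ref{prop:2} the set $L_U$ is scattered.

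I expect the genuine obstacle to lie in the case $D\neq0$: one must recognize the Hilbert~90 structure of the extension $\F_{q^{3n}}/\F_{q^3}$ and, above all, compute the norm of the constant $-A\,a^{-(q^{2n}+1)}$ carefully enough to reduce the entire argument to the single inequality $N_{q^{3n}/q^3}(a)\notin\F_q$; the reductions $N_{q^{3n}/q^3}(A)=N_{q^n/q}(A)$ and $b^{q^{2n}+1}\in\F_q\Rightarrow b\in\F_q$ are where the hypotheses $\gcd(i,3n)=3$ and $n\not\equiv0\bmod 3$ are really consumed. The second appearance of the norm condition, in subcase (b) of $D=0$, is the only other delicate point and is dispatched by the same computation.
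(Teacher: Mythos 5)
Your proposal is correct and follows essentially the same route as the paper's own proof: the same dichotomy on $s=xy^{q^{2n}}-yx^{q^{2n}}$ (your $D$), the same reduction of (\ref{form:1}) to $D^{q^i-1}=-A/a^{q^{2n}+1}$ with the norm-condition (Hilbert~90) argument killing the case $D\neq 0$, and the same factorization $(\alpha^{q^i}-\alpha)\bigl(a^{q^{2n}}y^{q^{2n+i}+1}-ay^{q^{2n}+q^i}\bigr)=0$ with a second norm contradiction in the case $D=0$. The only differences are expository: you spell out the facts the paper uses implicitly, namely $N_{q^{3n}/q^3}(A)=N_{q^n/q}(A)\in\F_q$ for $A\in\F_{q^n}$ when $3\nmid n$, and that $b^{q^{2n}+1}\in\F_q$ forces $b\in\F_q$ for $b\in\F_{q^3}$, and you apply the norm directly to $ay^{q^i-1}\in\F_{q^n}$ rather than invoking the solvability criterion for $z^{q^i-1}=\beta/a$.
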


\begin{proof}
By Proposition \ref{prop:2}, in order to prove the statement we have first
to determine the solutions $x,y\in\F_{q^{3n}}^*$ of Equations
(\ref{form:1}) and (\ref{form:2}), where we have chosen
$f(x)=ax^{q^i}$, with $a\in\F_{q^{3n}}^*$ and $1\leq i\leq 3n-1$ and
satisfying Conditions $(i)$ and $(ii)$. With these assumptions,
Equations (\ref{form:1}) and (\ref{form:2}) become
\begin{equation}\label{form:2.1}
a^{q^{2n}+1}(x^{q^{2n}}y-xy^{q^{2n}})^{q^i}=(xy^{q^{2n}}-yx^{q^{2n}})A
\end{equation}
and
\begin{equation}\label{form:2.2}
a^{q^{2n}}(x^{q^{2n+i}}y-xy^{q^{2n+i}})+a(x^{q^{2n}}y^{q^i}-x^{q^i}y^{q^{2n}})=(xy^{q^{2n}}-yx^{q^{2n}})B.\end{equation}
Let $s:=xy^{q^{2n}}-yx^{q^{2n}}$. By
\eqref{form:2.1}, if $s\ne0$, then $s$ turns out to be a solution in $\F_{q^{3n}}$ of
the equation
\begin{equation}\label{form:3}
z^{q^i-1}=-\frac{A}{a^{q^{2n}+1}}
\end{equation}
and, from Conditions $(i)$, Equation \eqref{form:3} has solutions if
and only if $N_{q^{3n}/q^3}\Big(-\frac{A}{a^{q^{2n}+1}}\Big)=1$,
namely
\begin{equation}\label{form:4}
(-1)^n N_{q^{3n}/q^3}(A)=(N_{q^{3n}/q^3}(a))^{q+1}\mbox{\quad\quad
if $2n\equiv 1 \bmod 3$}\end{equation} or
\begin{equation}\label{form:5}(-1)^n
N_{q^{3n}/q^3}(A)=(N_{q^{3n}/q^3}(a))^{q^2+1}\mbox{\quad\quad if
$2n\equiv -1 \bmod 3$}.\end{equation} Since $A\in\F_{q^n}^*$ and
since $n\not\equiv 0 \bmod 3$, we get $N_{q^{3n}/q^3}(A)\in\F_{q}$
and from Condition $(ii)$ it follows that both Equations
\eqref{form:4} and \eqref{form:5} cannot be satisfied. This means
that $s=0$ and hence $x=\alpha y$, for some $\alpha\in\F_{q^n}^*$.
Substituting in \eqref{form:2.2}, we get
\begin{equation}\label{form:6}
(\alpha^{q^i}-\alpha)(a^{q^{2n}}y^{q^{2n+i}+1}-ay^{q^{2n}+q^i})=0.
\end{equation}
If $\alpha^{q^i}\ne\alpha$, raising the previous equation to the
$q^n$--th power, then
$$y^{(q^n-1)(q^i-1)}=a^{1-q^n},$$ i.e.
$$(ay^{q^i-1})^{q^n-1}=1,$$ which is verified if and only if $y^{q^i-1}=\frac\beta
a$, for some $\beta\in\F_{q^n}^*$. This means that
$y\in\F_{q^{3n}}^*$ turns out to be a solution of the equation
$z^{q^i-1}=\beta/a$ and, from Conditions $(i)$, this happens if and
only if
$$N_{q^{3n}/q^3}(\beta)=N_{q^{3n}/q^3}(a).$$
Since $\beta\in\F_{q^n}$, from Conditions $(i)$ it follows
$N_{q^{3n}/q^3}(\beta)\in\F_q^*$ and taking Condition $(ii)$ into
account, we get a contradiction. Hence the element
$\alpha\in\F_{q^n}$ is such that $\alpha^{q^i}=\alpha$, and since
$gcd(i,n)=1$, we get $\alpha\in\F_q^*$. Substituting $x=\alpha y$ in
\eqref{form:peso1} we get $\lambda=\alpha\in\F_q^*$, proving the
assertion by Proposition \ref{prop:2}.
\end{proof}

\bigskip

Observe that the $3n$--dimensional $\F_q$--vector subspace $U$ of $F_{q^{6n}}$ defining the linear set $L_U$ of Theorem \ref{thm:family(1)} is also an $n$--dimensional $\F_{q^3}$--vector subspace. In particular, when $n=2$, $U$ is a 2--dimensional $\F_{q^3}$--subspace of $\F_{q^{12}}$ and hence it can be always seen as the set of zeros of a polynomial $$x^{q^6}+\alpha x^{q^3}+\beta x\in\F_{q^{12}}[x],$$ where $N_{q^{12}/q^3}(\beta)=1$ and $\alpha^{q^3+1}=\beta^{q^3}-\beta^{q^6+q^3+1}$; see \cite{Ball1999}. Hence, the examples of scattered $\F_q$--linear sets of rank 6 constructed in $PG(2,q^4)$ by \cite{BBL2000} belong to the family presented in Theorem \ref{thm:family(1)}.
\bigskip

Now, we will construct, for  $q\equiv 1 \bmod 3$, another family of scattered $\F_q$--linear sets of $PG(2,q^{2n})$ of rank $3n$ defined by an $\F_q$--vector subspace which is not an $\F_{q^3}$--subspace. Indeed,

\begin{theorem}\label{thm:family(2)}
For any prime power $q\equiv 1 \bmod 3$ and any integer $n\geq 2$,
the set $$L_U=\{\langle ax^{q^i}+x\omega\rangle_{\F_{q^{2n}}}:\
x\in\F_{q^{3n}}^*\}$$ satisfying the following assumptions
\bigskip

$
\begin{array}{ll}
(I) & \mbox{$gcd(i,2n)=gcd(i,3n)=1$,}\\\\
(II) & \mbox{$\Big(N_{q^{3n}/q}(a)\Big)^{\frac{q-1}{3}}\ne 1$}
\end{array}
$
\bigskip

\noindent is a scattered $\F_q$--linear set of the
projective plane $PG(2,q^{2n})$ of rank $3n$.
\end{theorem}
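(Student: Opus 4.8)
The plan is to run the very same machine as in the proof of Theorem \ref{thm:family(1)}, invoking Proposition \ref{prop:2} with $f(x)=ax^{q^i}$: it suffices to show that every pair $(x,y)\in\F_{q^{3n}}^*\times\F_{q^{3n}}^*$ satisfying Equations (\ref{form:2.1}) and (\ref{form:2.2}) forces $\lambda\in\F_q^*$. The arithmetic input that replaces the ``$\notin\F_q$'' argument of the previous theorem is the following elementary fact: since $q\equiv 1\bmod 3$, for every $c\in\F_{q^n}$ the transitivity of the norm gives $N_{q^{3n}/q}(c)=N_{q^n/q}(c)^3$, so such a norm is automatically a cube in $\F_q^*$; meanwhile Condition $(II)$ says exactly that $N_{q^{3n}/q}(a)$ is a non-cube, and since $\gcd(2,3)=1$ an element of $\F_q^*$ is a cube if and only if its square is.

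First I would treat Equation (\ref{form:2.1}). Putting $s:=xy^{q^{2n}}-yx^{q^{2n}}$ and assuming $s\ne 0$, one lands on Equation (\ref{form:3}), so that $s$ solves $z^{q^i-1}=-A/a^{q^{2n}+1}$. Here Condition $(I)$ gives $\gcd(i,3n)=1$, whence $\gcd(q^i-1,q^{3n}-1)=q-1$, and the equation is solvable in $\F_{q^{3n}}^*$ precisely when $N_{q^{3n}/q}\bigl(-A/a^{q^{2n}+1}\bigr)=1$. Computing this norm, the numerator $N_{q^{3n}/q}(-A)$ is a cube because $-A\in\F_{q^n}^*$, while $N_{q^{3n}/q}(a^{q^{2n}+1})=N_{q^{3n}/q}(a)^2$. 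Solvability would therefore force $N_{q^{3n}/q}(a)^2$, and hence $N_{q^{3n}/q}(a)$, to be a cube, contradicting $(II)$. So $s=0$, which as in the earlier proof yields $x=\alpha y$ with $\alpha\in\F_{q^n}^*$.

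Substituting $x=\alpha y$ into Equation (\ref{form:2.2}) reproduces Equation (\ref{form:6}), namely $(\alpha^{q^i}-\alpha)\bigl(a^{q^{2n}}y^{q^{2n+i}+1}-a\,y^{q^{2n}+q^i}\bigr)=0$. I would then exclude the branch $\alpha^{q^i}\ne\alpha$ exactly as in Theorem \ref{thm:family(1)}: the vanishing of the second factor, raised to the $q^n$-th power, gives $(a\,y^{q^i-1})^{q^n-1}=1$, so $y^{q^i-1}=\beta/a$ for some $\beta\in\F_{q^n}^*$. Solvability of $z^{q^i-1}=\beta/a$ now requires $N_{q^{3n}/q}(\beta)=N_{q^{3n}/q}(a)$; but $N_{q^{3n}/q}(\beta)$ is a cube, so $N_{q^{3n}/q}(a)$ would be a cube, again contradicting $(II)$. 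Hence $\alpha^{q^i}=\alpha$, and since $\alpha\in\F_{q^{3n}}$ with $\gcd(i,3n)=1$ this forces $\alpha\in\F_q^*$. Plugging $x=\alpha y$ with $\alpha\in\F_q$ into (\ref{form:peso1}) gives $\lambda=\alpha\in\F_q^*$, and Proposition \ref{prop:2} concludes.

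The main obstacle, and the genuine point of divergence from Theorem \ref{thm:family(1)}, lies in pinning down the correct multiplicative condition on $a$. With $\gcd(i,3n)=1$ the relevant norm now descends all the way to $\F_q$ rather than to $\F_{q^3}$, and the obstructing elements $-A$ and $\beta$ both lie in $\F_{q^n}$, so their $\F_q$-norms are compelled to be cubes. The delicate step is to recognize that the only way to block both solvability conditions at once is to require $N_{q^{3n}/q}(a)$ to be a non-cube, and to notice that the innocuous square $N_{q^{3n}/q}(a)^2$ produced by the first equation carries precisely the same cube/non-cube information as $N_{q^{3n}/q}(a)$ itself. Once this dichotomy is identified, both branches of the argument close uniformly.
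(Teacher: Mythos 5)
Your proof is correct and follows the paper's own route almost verbatim: the reduction via Proposition \ref{prop:2}, the analysis of Equations (\ref{form:2.1})--(\ref{form:2.2}) through $s=xy^{q^{2n}}-yx^{q^{2n}}$, the exclusion of $s\neq 0$, then of $\alpha^{q^i}\neq\alpha$, ending with $\lambda=\alpha\in\F_q^*$. The one place where you genuinely diverge is the contradiction in the branch $s\neq 0$. The paper turns the solvability condition $N_{q^{3n}/q}\bigl(-A/a^{q^{2n}+1}\bigr)=1$ into its Equation (\ref{form:7}), namely $(N_{q^{3n}/q}(a))^{2}=(-1)^n(N_{q^{n}/q}(A))^3$, raises it to the power $\frac{q-1}{3}$ to get (\ref{form:8}), and then argues by cases on the parity of $\frac{n(q-1)}{3}$ (equivalently, on $q$ odd or even). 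You instead absorb the sign into the norm: $N_{q^{3n}/q}(-A)=\bigl(N_{q^{n}/q}(-A)\bigr)^3$ is automatically a cube in $\F_q^*$ because $-A\in\F_{q^n}^*$, so solvability would force $N_{q^{3n}/q}(a)^2$, and hence $N_{q^{3n}/q}(a)$ itself (squaring is an automorphism of the order-three group $\F_q^*/(\F_q^*)^3$ since $\gcd(2,3)=1$), to be a cube, contradicting Condition $(II)$ read as a non-cube condition --- which is exactly the reading the paper itself uses in the second branch, where $(N_{q^{n}/q}(\beta))^3=N_{q^{3n}/q}(a)$ is refuted. Your version buys uniformity: both branches close by one and the same cube/non-cube dichotomy, with no sign bookkeeping and no case split on the characteristic; the paper's version, at the cost of that case analysis, records the explicit numerical relation (\ref{form:7}) between $N_{q^{3n}/q}(a)$ and $N_{q^{n}/q}(A)$. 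Either way the argument is complete.
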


\begin{proof}
The first part of the proof is the same as in Theorem
\ref{thm:family(1)}. So, we have to determine the solutions
$x,y\in\F_{q^{3n}}^*$ of Equations (\ref{form:2.1}) and
(\ref{form:2.2}). Putting again $s:=xy^{q^{2n}}-yx^{q^{2n}}$, if
$s\ne0$, from the previous equality, $s$ turns out to be a solution
in $\F_{q^{3n}}^*$ of (\ref{form:3}) and, from Conditions $(I)$.
Equation (\ref{form:3}) has solutions if and only if
$N_{q^{3n}/q}\Big(-\frac{A}{a^{q^{2n}+1}}\Big)=1$, namely
\begin{equation}\label{form:7}
(N_{q^{3n}/q}(a))^{2}=(-1)^n (N_{q^{n}/q}(A))^3,
\end{equation} implying
\begin{equation}\label{form:8}
\Bigg(\Big(N_{q^{3n}/q}(a)\Big)^{\frac{q-1}{3}}\Bigg)^{2}=(-1)^{\frac{n(q-1)}3}.
\end{equation}
If $\frac{n(q-1)}3$ is even, Condition $(II)$ implies that $q$ is
odd and $\Big(N_{q^{3n}/q}(a)\Big)^{\frac{q-1}{3}}=-1$, and raising
this equality to the 3--rd power we get a contradiction. If
$\frac{n(q-1)}3$ is odd, then $q$ is even, and hence
$\Big(N_{q^{3n}/q}(a)\Big)^{\frac{q-1}{3}}=1$, again contradicting
Condition $(II)$. This means that $s=0$ and hence $x=\alpha y$, for
some $\alpha\in\F_{q^n}^*$, and arguing again as in the previous
proof, if $\alpha^{q^i}\ne\alpha$, then $y\in\F_{q^{3n}}^*$ turns
out to be a solution of the equation $z^{q^i-1}=\beta/a$, for some
$\beta\in\F_{q^n}^*$. From Conditions $(I)$, this happens if and
only if
$$(N_{q^{n}/q}(\beta))^3=N_{q^{3n}/q}(a),$$ which means that
$N_{q^{n}/q}(\beta)$ is a solution in $\F_q^*$ of the equation
$z^3=N_{q^{3n}/q}(a)$, contradicting Condition $(II)$. Hence the
element $\alpha\in\F_{q^n}^*$ is such that $\alpha^{q^i}=\alpha$, and
since $gcd(i,n)=1$, we get $\alpha\in\F_q^*$, yielding as in the previous proof
$\lambda\in\F_q^*$. By Proposition \ref{prop:2}, we have the assertion.
\end{proof}

\bigskip

Putting together Theorems \ref{thm:family(1)} and \ref{thm:family(2)} we get the following

\begin{theorem}\label{cor:1}
\begin{itemize}
\item If $n\not\equiv 0 \bmod 3$, there exist scattered $\F_q$--linear sets in $PG(2,q^{2n})$ of rank $3n$ for each prime power $q\geq
2$.
\item If $n\equiv 0 \bmod 3$, there exist scattered $\F_q$--linear sets in $PG(2,q^{2n})$ of rank $3n$ for each prime power $q\equiv 1 \bmod 3$.
\end{itemize}
\end{theorem}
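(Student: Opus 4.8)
The plan is straightforward: Theorem \ref{cor:1} is nothing more than a bookkeeping consolidation of the two preceding existence theorems, so I would prove it by a direct case analysis on the residue of $n$ modulo $3$, invoking Theorem \ref{thm:family(1)} and Theorem \ref{thm:family(2)} to supply the actual scattered linear sets. The only genuine content is to verify that in each case one can legitimately \emph{choose} data $(a,i)$ satisfying the hypotheses of the relevant theorem, so that the sets it produces are nonempty as a family.

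\medskip

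First I would handle the case $n\not\equiv 0\bmod 3$. Here I appeal to Theorem \ref{thm:family(1)}, which already asserts the existence of a scattered linear set of rank $3n$ in $PG(2,q^{2n})$ for \emph{every} prime power $q\geq 2$, subject to finding an integer $i$ with $\gcd(i,2n)=1$ and $\gcd(i,3n)=3$, together with an element $a\in\F_{q^{3n}}^*$ with $N_{q^{3n}/q^3}(a)\notin\F_q$. The exponent condition is satisfiable precisely because $3\nmid n$: one wants $i$ divisible by $3$ but coprime to $2n$ and to $n$, and since $\gcd(3,n)=1$ such an $i$ exists (for instance by the Chinese Remainder Theorem, choosing $i\equiv 0\bmod 3$ and $i$ coprime to the prime factors of $2n$ other than $3$). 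The norm condition on $a$ is satisfiable because $N_{q^{3n}/q^3}\colon\F_{q^{3n}}^*\to\F_{q^3}^*$ is surjective and $\F_{q^3}^*\setminus\F_q^*\neq\emptyset$. With these choices Theorem \ref{thm:family(1)} directly yields the desired linear set, establishing the first bullet.

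\medskip

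Second I would treat $n\equiv 0\bmod 3$. In this regime the exponent constraints of Theorem \ref{thm:family(1)} are incompatible (one cannot have both $\gcd(i,3n)=3$ and the arithmetic forcing $s=0$ used there when $3\mid n$), so I switch to Theorem \ref{thm:family(2)}, whose hypotheses require only the extra arithmetic assumption $q\equiv 1\bmod 3$. For such $q$ one needs an $i$ with $\gcd(i,2n)=\gcd(i,3n)=1$, which exists since any $i$ coprime to $6n$ works, and an element $a$ with $\bigl(N_{q^{3n}/q}(a)\bigr)^{(q-1)/3}\neq 1$; the latter is satisfiable because $q\equiv 1\bmod 3$ guarantees $3\mid q-1$, so the cubes form a proper subgroup of $\F_q^*$, and the norm map $N_{q^{3n}/q}$ is surjective onto $\F_q^*$, hence hits a non-cube. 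Theorem \ref{thm:family(2)} then provides the scattered linear set, establishing the second bullet.

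\medskip

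I do not expect any real obstacle here, since the heavy lifting was already carried out in the proofs of Theorems \ref{thm:family(1)} and \ref{thm:family(2)}; the statement is a packaging result. The one point requiring mild care is the case distinction itself: one must be sure the two theorems together cover all pairs $(q,n)$ claimed, with no overlap-gap at $n\equiv 0\bmod 3$ and $q\not\equiv 1\bmod 3$ (which is correctly left uncovered by the statement), and that the exponent $i$ can always be chosen compatibly with the stated gcd conditions. Because these are elementary divisibility and surjectivity facts, the proof should reduce to a single short paragraph citing the two results.
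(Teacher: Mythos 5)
Your proposal is correct and takes essentially the same route as the paper, which proves Theorem \ref{cor:1} simply by ``putting together'' Theorems \ref{thm:family(1)} and \ref{thm:family(2)} according to the residue of $n$ modulo $3$. Your additional verifications that admissible parameters exist (e.g.\ $i=3$ with $N_{q^{3n}/q^3}(a)\notin\F_q$ when $3\nmid n$, and $i=1$ with $N_{q^{3n}/q}(a)$ a non-cube when $q\equiv 1 \bmod 3$) are sound and merely make explicit what the paper leaves implicit in the hypotheses of those two theorems.
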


\bigskip
\subsection*{Binomial case: $f(x):=ax^{q^i}+by^{q^j}$, $a,b\in\F_{q^{3n}}^*$  {\small and $1\leq i,j\leq 3n-1$}}

\bigskip

With this type of function it is clear that the linear set (\ref{form:scatt}) has $\F_q$ as maximum subfield of linearity when $\gcd(i,j,2n)=1$. In particular we will study the case when
$j=2n+i$ and, obviously $\gcd(i,2n)=1$. First of all we need a technical lemma. Denoting by $Tr_{q^{3n}/q}(\cdot)$ the trace function from $\F_{q^{3n}}$ over $\F_q$, we can consider the non--degenerate symmetric bilinear form of $\mathbb F_{q^{3n}}$
over $\mathbb F_q$ defined by the following rule $<x,y>:=Tr_{q^{3n}/q}(xy)$. Then the adjoint map $\bar \varphi$ of an $\F_q$--linear map
$\varphi(x)=\sum_{i=0}^{3n-1}a_i x^{q^i}$ of $\F_{q^{3n}}$ is $\bar
\varphi(x)=\sum_{i=0}^{3n-1}a_i^{q^{3n-i}} x^{q^{3n-i}}$ (see e.g. \cite[Sec. 2.2]{MP}). Now, we can prove the following

\begin{lemma}\label{lem2}
Let $\varphi$ be an $\mathbb F_q$-linear map of $\mathbb F_{q^{3n}}$
and $\bar \varphi$ the adjoint of $\varphi $ with respect to the bilinear form
$<,>$. Then the maps defined by $\varphi(x)/x$ and $\bar \varphi(x)/x$  have
the same image.
\end{lemma}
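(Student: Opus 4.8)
The plan is to translate the claimed equality of images into a statement about \emph{singular} linear maps and then exploit the duality between a map and its adjoint. The first observation is that a scalar $\lambda\in\F_{q^{3n}}$ lies in the image of $x\mapsto\varphi(x)/x$, as $x$ ranges over $\F_{q^{3n}}^*$, precisely when the equation $\varphi(x)=\lambda x$ has a non-zero solution, i.e.\ when the $\F_q$--linear endomorphism $\varphi-\lambda\,\mathrm{id}$ has non-trivial kernel. The identical characterization applies to $\bar\varphi$. Hence the lemma is equivalent to the assertion that, for every $\lambda\in\F_{q^{3n}}$, the map $\varphi-\lambda\,\mathrm{id}$ is singular if and only if $\bar\varphi-\lambda\,\mathrm{id}$ is singular, and it suffices to check this membership condition value-by-value in $\lambda$.

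Next I would identify the adjoint of $\varphi-\lambda\,\mathrm{id}$. Multiplication by a fixed $\lambda\in\F_{q^{3n}}$ is self-adjoint for the form $<x,y>=Tr_{q^{3n}/q}(xy)$, since $<\lambda x,y>=Tr_{q^{3n}/q}(\lambda xy)=<x,\lambda y>$; because passing to the adjoint is additive, the adjoint of $\varphi-\lambda\,\mathrm{id}$ is exactly $\bar\varphi-\lambda\,\mathrm{id}$. Therefore the whole problem reduces to the single general fact that an $\F_q$--linear endomorphism $\psi$ of $\F_{q^{3n}}$ and its adjoint $\bar\psi$ are simultaneously singular.

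That fact follows from non-degeneracy of $<,>$. For any $\F_q$--linear $\psi$ one has $(\mathrm{Im}\,\psi)^{\perp}=\ker\bar\psi$: indeed $y\perp\mathrm{Im}\,\psi$ means $<\psi(x),y>=0$ for all $x$, equivalently $<x,\bar\psi(y)>=0$ for all $x$, which by non-degeneracy forces $\bar\psi(y)=0$. Combining $\dim_{\F_q}(\mathrm{Im}\,\psi)^{\perp}=3n-\dim_{\F_q}\mathrm{Im}\,\psi$ with the rank--nullity identity $\dim_{\F_q}\mathrm{Im}\,\psi=3n-\dim_{\F_q}\ker\psi$ yields $\dim_{\F_q}\ker\bar\psi=\dim_{\F_q}\ker\psi$; in particular one kernel is trivial exactly when the other is. Applying this to $\psi=\varphi-\lambda\,\mathrm{id}$ and using the reformulation above gives, for each $\lambda$, that $\lambda$ is in the first image iff it is in the second, so the two images coincide.

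There is no deep obstacle here: the argument is essentially a bookkeeping exercise in adjoints. The only points requiring care are recognizing that ``$\lambda$ belongs to the image of $\varphi(x)/x$'' is an eigenvalue/singularity condition rather than something to be computed explicitly, and verifying that scalar multiplication is self-adjoint so that $\overline{\varphi-\lambda\,\mathrm{id}}=\bar\varphi-\lambda\,\mathrm{id}$; once these are in place, the equality $\dim\ker\psi=\dim\ker\bar\psi$ finishes the proof in one line. An alternative, should one prefer to avoid the adjoint-kernel lemma, is to write $\varphi$ as a matrix with respect to a self-dual (trace-orthonormal) $\F_q$--basis, note that $\bar\varphi$ is then the transpose, and invoke $\det(\varphi-\lambda\,\mathrm{id})=\det(\varphi-\lambda\,\mathrm{id})^{\mathsf T}=\det(\bar\varphi-\lambda\,\mathrm{id})$ directly.
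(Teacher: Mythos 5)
Your proof is correct, but it takes a genuinely different route from the paper's. You work entirely inside $\F_{q^{3n}}$: membership of $\lambda$ in the image of $\varphi(x)/x$ is recast as singularity of the $\F_q$-linear map $\varphi-\lambda\,\mathrm{id}$; since multiplication by $\lambda$ is self-adjoint for the trace form and adjunction is additive, $\overline{\varphi-\lambda\,\mathrm{id}}=\bar\varphi-\lambda\,\mathrm{id}$; and the identity $(\mathrm{Im}\,\psi)^{\perp}=\ker\bar\psi$ together with rank--nullity shows that a map and its adjoint have kernels of equal dimension, so the two singularity conditions agree for every $\lambda$. The paper argues instead in $\V=\F_{q^{3n}}\times\F_{q^{3n}}$, equipped with the alternating $\F_{q^{3n}}$-bilinear form $\sigma((x,y),(u,v))=xv-yu$ and its trace $\sigma'$: it shows that the graph subspaces satisfy $U_{\varphi}^{\perp'}=U_{\bar\varphi}$, observes that each point $P_t=\langle(1,t)\rangle_{\F_{q^{3n}}}$ of $PG(1,q^{3n})$ is self-perpendicular because $\sigma$ is alternating, and then the Grassmann formula gives $P_t\in L_{U_{\varphi}}\Leftrightarrow P_t\in L_{U_{\bar\varphi}}$, which is exactly the equality of images. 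Your argument is more elementary and self-contained, using nothing beyond nondegeneracy of the trace form and rank--nullity; the paper's argument requires setting up the symplectic duality, but in exchange it exhibits the structural fact $U_{\varphi}^{\perp'}=U_{\bar\varphi}$, i.e.\ the linear sets of $\varphi$ and $\bar\varphi$ on the projective line correspond under a symplectic polarity all of whose points are absolute --- a viewpoint that is standard in the linear-set literature and reusable elsewhere. One caveat concerning only your closing alternative: a trace-orthonormal (self-dual) basis of $\F_{q^{3n}}$ over $\F_q$ does not always exist (for $q$ odd it exists only when $3n$ is odd), so the determinant variant should instead be run with the Gram matrix $G$ of the form, the adjoint then having matrix $G^{-1}M^{\mathsf T}G$, which has the same characteristic of singularity; your main argument does not rely on this and is unaffected.
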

\begin{proof}
 Let  $\V=\mathbb F_{q^{3n}} \times \mathbb F_{q^{3n}}$
and let $\sigma: \V \times \V \longrightarrow \mathbb
F_{q^{3n}}$ be the  non-degenerate alternating bilinear  form of
$\V$ defined by $\sigma((x,y), (u,v))=xv-yu$. Then

$$\sigma'((x,y), (u,v))=Tr_{q^{3n}/q}(\sigma((x,y), (u,v)))$$
is a non-degenerate alternating bilinear form on $\V$, when $\V$ is regarded as a $6n$-dimensional vector space over $\F_{q}$. Let $\perp$ and $\perp'$ be the orthogonal
complement maps defined by $\sigma$ and $\sigma'$ on the lattices of
the $\F_{q^{3n}}$-subspaces and the $\F_{q}$-subspaces of $\V$, respectively.  Recall that  if $W$ is an $\F_{q^{3n}}$-subspace of $\V$ and $U$ is an $\F_{q}$-subspace of $\V$ then
$$dim_{\F_{q^{3n}}}W^\perp+dim_{\F_{q^{3n}}}W= 2$$ and
$$\dim_{\F_{q}}U^{\perp '}+\dim_{\F_{q}}U= 6n.$$  Also,  it is easy to
see that $W^\perp=W^{\perp '}$ for each $\F_{q^{3n}}$-subspace $W$
of $\mathbb V$ and, since $\sigma$ is an alternating form, if $W$ is
an 1-dimensional $\F_{q^{3n}}$-subspace of $\V$, then
$W^\perp=W$. Let $U_{\varphi}=\{(x,\varphi(x))\,:\, x\in
F_{q^{3n}}\}$, where $\varphi$ is an $\mathbb F_q$-linear map of
$\mathbb F_{q^{3n}}$. Then $U_{\varphi}$ is a $3n$-dimensional
$\mathbb F_q$-subspace of $\V$ and a direct calculation shows
that $U_{\varphi}^{\perp '}=U_{\bar \varphi}$. Note that an element
$t \in \mathbb F_{q^{3n}}$ belongs to the image of the map
$\varphi(x)/x$ if and only if the point $P_t=\langle (1,t)
\rangle_{\mathbb F_{q^{3n}}}$ of $PG(\V, \F_{q^{3n}})=PG(1,q^{3n})$ belongs to the
$\mathbb F_q$-linear set $L_{U_{\varphi}}$. Since
$P_t^\perp=P_t^{\perp '}=P_t$, by using the Grassmann formula, we get
$$P_t \in L_{U_{\varphi}} \Leftrightarrow dim_{\mathbb F_q}
(U_{\varphi} \cap P_t)\geq 1 \Leftrightarrow dim_{\mathbb F_q}
(U_{\varphi}^{\perp '} \cap P_t^{\perp '})\geq 1 \Leftrightarrow
dim_{\mathbb F_q} (U_{\bar \varphi} \cap P_t)\geq 1 \Leftrightarrow
P_t \in L_{U_{\bar \varphi}},$$
 i.e., $t \in \mathbb F_{q^{3n}}$
belongs to the image of the map $\varphi(x)/x$ if and only if $t$
belongs to the image of the map $\bar \varphi(x)/x$.
\end{proof}

\bigskip

\noindent Now we can show the following result.

\begin{prop}\label{prop:3}
Let $f:=f_{i,a,b}:x\in\F_{q^{3n}}\rightarrow ax^{q^i}+bx^{q^{2n+i}}\in\F_{q^{3n}}$, with $a,b\in \mathbb F_{q^{3n}}^*$ and  $gcd(i,2n)=1$, and let $\omega$ be an element of $\F_{q^{2n}}\setminus \F_{q^n}$ such that $\omega^2=A+B\omega$, with $A,B\in\F_{q^n}$ and $A\ne
0$.  If
\begin{equation}\label{eq3}
\frac{f_{i,a,b}(x)}{x}\notin \mathbb F_{q^n}\quad \text{for each }x\in \mathbb F_{q^{3n}}^*
\end{equation}
then the set $$L_U=\{\langle f_{i,a,b}(x)+wx \rangle_{\F_{q^{2n}}} \,:\, x\in \F_{q^{3n}}^*\rangle$$
turns out to be a scattered $\F_q$--linear of rank $3n$ of the
projective plane $\P=PG(\F_{q^{6n}},\F_{q^{2n}})=PG(2,q^{2n})$.
\end{prop}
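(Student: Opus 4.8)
My plan is to apply the scatteredness criterion of Proposition \ref{prop:2}: I must show that every pair $(x,y)\in\F_{q^{3n}}^*\times\F_{q^{3n}}^*$ satisfying \eqref{form:1} and \eqref{form:2} forces $\lambda=\frac{f(x)+x\omega}{f(y)+y\omega}\in\F_q^*$. The decisive structural feature of the binomial $f=f_{i,a,b}$ is that its two exponents differ by $2n$, so that $p^{q^{i}}=p^{q^{2n+i}}$ for every $p\in\F_{q^n}$ (since $p^{q^{2n}}=p$), whence $f(px)=p^{q^{i}}f(x)$. In other words $f$ is a $\sigma$-semilinear map of $\F_{q^{3n}}$, regarded as a $3$-dimensional $\F_{q^n}$-space, where $\sigma\colon p\mapsto p^{q^{i}}$ generates $\mathrm{Gal}(\F_{q^n}/\F_q)$ because $\gcd(i,n)=1$ follows from $\gcd(i,2n)=1$. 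Moreover \eqref{eq3} makes $f$ injective, hence bijective.

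First I would note that any such $(x,y)$ gives $\lambda\in\F_{q^{2n}}$; writing $\lambda=u+v\omega$ with $u,v\in\F_{q^n}$ and using $\omega^2=A+B\omega$, the proportionality \eqref{form:peso} splits, over the $\F_{q^{3n}}$-basis $\{1,\omega\}$ of $\F_{q^{6n}}$, into the two relations $f(x)=u\,f(y)+vAy$ and $x=uy+v\big(f(y)+By\big)$. I then distinguish two cases. If $v=0$, the second relation gives $x=uy$ with $u\in\F_{q^n}$, so semilinearity yields $f(x)=u^{q^{i}}f(y)$, while the first gives $f(x)=u\,f(y)$; as $f(y)\neq0$ this forces $u^{q^{i}}=u$, hence $u\in\F_q$ because $\sigma$ has order $n$, and therefore $\lambda=u\in\F_q^*$. (This is exactly the case $s:=xy^{q^{2n}}-x^{q^{2n}}y=0$, and it is here that \eqref{eq3} enters, through $f(y)/y\notin\F_{q^n}$.)

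The heart of the argument is the case $v\neq0$, which I aim to exclude. Eliminating $f(y)$ between the two relations expresses both $f(x)$ and $f(y)$ as $\F_{q^n}$-combinations of $x$ and $y$; furthermore \eqref{eq3} prevents $x$ and $y$ from being $\F_{q^n}$-proportional (otherwise $f(y)/y\in\F_{q^n}$), so $S:=\langle x,y\rangle_{\F_{q^n}}$ is a $2$-dimensional $\F_{q^n}$-subspace and, by semilinearity, $f(S)\subseteq S$. Writing $S$ as the kernel of $v\mapsto\mathrm{Tr}_{q^{3n}/q^n}(cv)$ for a suitable $c\in\F_{q^{3n}}^*$, I would translate the inclusion $f(S)\subseteq S$, via a short trace manipulation using that $\mathrm{Tr}_{q^{3n}/q^n}$ is invariant under the Frobenius and that $\mathrm{Tr}(z^{q^{i}})=\mathrm{Tr}(z)^{q^{i}}$, into the statement $\bar f(c)=\gamma c$ for some $\gamma\in\F_{q^n}$, where $\bar f$ is precisely the adjoint of $f$ occurring in Lemma \ref{lem2}. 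Thus $\bar f(c)/c\in\F_{q^n}$, and Lemma \ref{lem2} then produces $x_0\neq0$ with $f(x_0)/x_0=\gamma\in\F_{q^n}$, contradicting \eqref{eq3}. Hence $v\neq0$ is impossible, and the criterion of Proposition \ref{prop:2} is verified.

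I expect the third paragraph to be the main obstacle: pinning down the invariant hyperplane $S$, checking $f(S)\subseteq S$, and—above all—recognising that the passage ``$f$-invariant hyperplane $\longleftrightarrow$ eigenvector of the adjoint $\bar f$'' is exactly the situation Lemma \ref{lem2} is built to exploit, so that the hypothesis \eqref{eq3} on $f$ can be played back against $\bar f$. By contrast, the case $v=0$ and the bookkeeping that turns \eqref{form:peso} into the two displayed relations are routine.
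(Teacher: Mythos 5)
Your proof is correct, and it takes a genuinely different route from the paper's, even though both arguments rest on the same two pillars: the criterion of Proposition \ref{prop:2}, and Lemma \ref{lem2} as the device that converts a statement about the adjoint $\bar f$ into a violation of \eqref{eq3}. The paper never decomposes $\lambda$ as $u+v\omega$: it substitutes the binomial into \eqref{form:1} alone, obtaining a linearized equation $G(s)=0$ in $s=xy^{q^{2n}}-yx^{q^{2n}}$; since \eqref{eq3} forces $N_{q^{3n}/q^{n}}(a)\neq -N_{q^{3n}/q^{n}}(b)$, the combination $a^{q^n}G(s)+b^{q^{2n}}G(s)^{q^n}=0$ collapses to the three-term equation \eqref{form5}, and an explicit computation (via the identity $\bar f_{i,a,b}=f_{n-i,b^{q^{n-i}},a^{q^{3n-i}}}$) shows that a nonzero solution would place a value of $\bar f(x)/x$ inside $\F_{q^n}$, contradicting \eqref{eq3} through Lemma \ref{lem2}; then $s=0$ gives $x=\alpha y$, and \eqref{form:2} factors as $(\alpha^{q^i}-\alpha)(f(y)^{q^{2n}}y-f(y)y^{q^{2n}})=0$ to finish. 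Your semilinearity/invariant-subspace argument replaces all of that polynomial manipulation, and the step you flag as the main obstacle does go through: for $z\in S$ and $p\in\F_{q^n}$, the adjoint relation and transitivity of the trace give $Tr_{q^{3n}/q}(\bar f(pc)z)=Tr_{q^{3n}/q}(pc\,f(z))=0$ because $f(z)\in S=\ker Tr_{q^{3n}/q^n}(c\,\cdot)$; since $\bar f(pc)=\sigma^{-1}(p)\,\bar f(c)$ (with $\sigma$ your automorphism $p\mapsto p^{q^i}$) and $Tr_{q^n/q}$ is non-degenerate, the functional $z \mapsto Tr_{q^{3n}/q^n}(\bar f(c)z)$ vanishes on $S$, whence $\bar f(c)\in\langle c\rangle_{\F_{q^n}}$, and the degenerate case $\bar f(c)=0$ contradicts \eqref{eq3} just as well. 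It is worth noting that the two proofs secretly handle the same object: your annihilator $c$ is, up to an $\F_{q^n}$-multiple, $-s^{q^{2n}}$, a Frobenius twist of the paper's $s$. What differs is what each framing buys. Yours explains conceptually why the exponent gap $2n$ (i.e.\ the $\F_{q^n}$-semilinearity of $f$) and Lemma \ref{lem2} are exactly the right tools, and your case $v=0$ dispatches the endgame by semilinearity instead of by factoring \eqref{form:2}; the paper's version stays at the level of elementary polynomial identities, needing neither the splitting of $\lambda$ nor the duality between $f$-invariant hyperplanes and eigenvectors of $\bar f$, and it produces the explicit reduced equation \eqref{form5} as a by-product.
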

\begin{proof}
\noindent
By Proposition \ref{prop:2}, in order to prove the statement we have first
to determine the solutions $x,y\in\F_{q^{3n}}^*$ of Equations
(\ref{form:1}) and (\ref{form:2}), with $f(x)=f_{i,a,b}(x)$ fulfilling Condition (\ref{eq3}).
With this choice Equation (\ref{form:1}) becomes
$$G(s):=b^{q^{2n}+1} s^{q^{2n+i}}-b^{q^{2n}}a
s^{q^{n+i}}+a^{q^{2n}+1}
s^{q^{i}}+As=0,$$
where $s=xy^{q^{2n}}-yx^{q^{2n}}$.
By (\ref{eq3}), $f_{i,a,b}(x)\ne 0$ for each $x\in\F_{q^{3n}}^*$ and then $N_{{q^{3n}/q^{n}}}(a)\neq -N_{{q^{3n}}/{q^{n}}}(b)$. Hence $G(s)=0$ if and only if
$a^{q^n} G(s)+b^{q^{2n}} G(s)^{q^n}=0$, i.e.

\begin{equation}\label{form5}
(N_{{q^{3n}}/{q^{n}}}(a)+N_{{q^{3n}}/{q^{n}}}(b))s^{q^i}+Ab^{q^{2n}}s^{q^n}+a^{q^n}As= 0.
\end{equation}

Let $L:=N_{q^{3n}/{q^{n}}}(a)+N_{q^{3n}/{q^{n}}}(b)$ and note that by (\ref{eq3})  $L\neq 0$.
This means that  if $s_0$ is a non-zero solution of (\ref{form5}), then $s_0$ satisfies the following equation
$$ \frac{b^{q^{2n-i}}s_0^{q^{n-i}}+a^{q^{n-i}}s_0^{q^{3n-i}}}{s_0}=\left(\frac{-L}{A} \right)^{q^{3n-i}},$$
i.e. there exists $s_0\in \F_{q^{3n}}^*$ such that
$$\frac{f_{n-i,b^{q^{2n-i}},a^{q^{n-i}}}(s_0)}{s_0} \in \F_{q^n},$$
and hence
$$\left(\frac{f_{n-i,b^{q^{2n-i}},a^{q^{n-i}}}(s_0)}{s_0} \right)^{q^{2n}}= \frac{f_{n-i,b^{q^{n-i}},a^{q^{3n-i}}}(s_0^{q^{2n}})}{s_0^{q^{2n}}} \in \F_{q^n}.$$
\noindent
Now, by Lemma \ref{lem2} the maps $f_{i,a,b}(x)/x$ and $\bar f_{i,a,b}(x)/x$  have the same image and a direct calculation shows that
$$\bar f_{i,a,b}=f_{n-i,b^{q^{n-i}},a^{q^{3n-i}}},$$
hence by (\ref{eq3})
$$\frac{f_{n-i,b^{q^{n-i}},a^{q^{3n-i}}}(x)}{x} \not\in \F_{q^n} \quad \text{for each }x\in \mathbb F_{q^{3n}}^*.$$
This means that Equation (\ref{form5}) only admits the zero solution, i.e.  $s=xy^{q^{2n}}-yx^{q^{2n}}=0$, which implies $x=\alpha y$ for some $\alpha\in \F_{q^n}^*$.
Now, from Equation (\ref{form:2}), substituting $f_{i,a,b}(x)=ax^{q^i}+bx^{q^{2n+i}}$ and $x=\alpha y$, we get
$$(\alpha^{q^i}-\alpha)(f_{i,a,b}(y)^{q^{2n}}y-f_{i,a,b}(y)y^{q^{2n}})=0.$$

\noindent If $\alpha^{q^i}\neq \alpha$, we get from the previous equation
$$f_{i,a,b}(y)^{q^{2n}}y=f_{i,a,b}(y)y^{q^{2n}}$$ for some $y\in\F_{q^{3n}}^*$, i.e. $f_{i,a,b}(y)/y \in \F_{q^n}$, a contradiction. Hence the
element $\alpha\in\F_{q^n}^*$ is such that $\alpha^{q^i}=\alpha$, and
since $gcd(i,n)=1$, we get $\alpha\in\F_q^*$. As in Theorems \ref{thm:family(1)} and \ref{thm:family(2)}, putting $x=\alpha y$, with $x,y\in\F_{q^{3n}}^*$ and $\alpha\in\F_{q}^*$ in
(\ref{form:peso1}) we get $\lambda=\alpha\in\F_q^*$, proving the
assertion by Proposition \ref{prop:2}.
\end{proof}

\bigskip

In what follows we will prove that if $q=2$, $i=1$ and $a=1$, then there exists at least an element $b\in \F_{q^{3n}}^*$ such that Condition (\ref{eq3}) is satisfied.
To this end we need the following preliminary result.

\begin{lemma}\label{lem:3}
 Set $n>1$ and $q=2$, consider  the function $H(t)=(1-t)/t^j$ defined in $\F_{q^{3n}}^*$, where $j=\frac{q^{2n+1}-1}{q-1}=2^{2n+1}-1$. Then there exists at least an element $b\in \mathbb{F}_{2^{3n}}^*$ not in the image of $H$ and such that $N_{{2^{3n}}/{2^{n}}}(b)\neq 1$.
\end{lemma}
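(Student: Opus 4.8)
The plan is to translate membership in the image of $H$ into a linear-algebra condition, use that to reduce the statement to a single value-set inequality, and then establish the inequality with the Hasse--Weil bound.

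First I would linearize. Since $j+1=2^{2n+1}$, clearing denominators turns $b=H(t)$ with $t\neq0$ into $bt^{2^{2n+1}}+t^2+t=0$. Every exponent on the left is a power of $2$, so $L_b(x):=bx^{2^{2n+1}}+x^2+x$ is an $\F_{2}$--linear endomorphism of $\F_{2^{3n}}$. Hence $b$ lies in the image of $H$ if and only if $\ker L_b\neq 0$, and each non-empty fibre $H^{-1}(b)$ is the punctured $\F_2$--subspace $\ker L_b\setminus\{0\}$, of cardinality $2^{\dim\ker L_b}-1$. Counting incidences $\{(b,t):L_b(t)=0,\ b\neq0\}$ in two ways gives the identity $M=1+\sum_{\dim\ker L_b\ge 2}(2^{\dim\ker L_b}-2)$, where $M$ denotes the number of omitted values of $H$ in $\F_{2^{3n}}^{*}$.

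Next I would compute norms. As $j\equiv 1\pmod{2^n-1}$ one has $N_{2^{3n}/2^n}(t)^{j}=N_{2^{3n}/2^n}(t)$, whence $N_{2^{3n}/2^n}(H(t))=N_{2^{3n}/2^n}\!\big((1+t)/t\big)$. Since $(1+t)/t$ runs through $\F_{2^{3n}}^{*}\setminus\{1\}$ as $t$ does, exactly $2^{2n}+2^n$ values of $t$ give a value of norm $1$; call this set $H_1$. I claim it suffices to prove that $H$ omits at least $|\ker N_{2^{3n}/2^n}|=2^{2n}+2^n+1$ elements. Indeed, were the Lemma false, every $b$ with $\dim\ker L_b\ge 2$ would have norm $1$, so all elements lying in a fibre of size $\ge 3$ would belong to $H_1$; then the identity above gives $M\le 1+|H_1|-\#\{b:\dim\ker L_b\ge2\}\le 2^{2n}+2^n$, contradicting $M\ge 2^{2n}+2^n+1$.

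Finally I would establish the omission bound through a cover branched at only two points. The substitution $y=t^{2^{2n}}$ together with extraction of the square root (a bijection in characteristic $2$) identifies the omitted values of $H$ with those of $G_0(y):=y^{2^{n-1}-1}(y+1)^{2^{n-1}}=(y^2+y)^{2^{n-1}}/y$, a map $\mathbb{P}^1\to\mathbb{P}^1$ of degree $2^n-1$. A direct computation of $G_0'$ shows $G_0$ is ramified only above $0$ and $\infty$, hence étale over $\mathbb{G}_m$: each fibre over $c\in\F_{2^{3n}}^{*}$ has exactly $2^n-1$ geometric points, and $M$ is precisely the number of $c$ over which Frobenius fixes no sheet. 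I would control this via the collisions $H(t_1)=H(t_2)$, $t_1\neq t_2$, i.e. the $\F_{2^{3n}}$--points of the fibre-product curve $\mathcal{C}:G_0(y_1)=G_0(y_2)$, $y_1\neq y_2$. Because $G_0$ is branched over only two points, the first projection $\mathcal{C}\to\mathbb{P}^1$ is branched only over $\{0,1,\infty\}$, bounding the genus of $\mathcal{C}$, and Hasse--Weil then produces $\approx 2^{3n}$ collisions, pushing $M$ past the threshold $2^{2n}+2^n+1$ for all large $n$.

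The hard part is that the needed inequality is essentially sharp: at $n=2$ one finds $M=2^{2n}+2^n+1$ with equality, so the value-set estimate must be made genuinely effective. The naive bound "collisions $\le$ (maximal fibre size)$\times$(elements in collision fibres)" loses exactly the factor $2^n$ that separates the Hasse--Weil main term from the threshold; to avoid this I must control the distribution of fibre sizes, equivalently bound the genus of $\mathcal{C}$ in the presence of the wild ramification of $G_0$ above $c=0$ and pin down the relevant monodromy. The finitely many small $n$—where $2^n-1$ is so small that every fibre automatically has dimension $\le 2$, as for $n=2$—would be disposed of by direct computation.
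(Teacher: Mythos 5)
The fatal problem is in your third step: the omission bound $M\ge 2^{2n}+2^n+1$ is the entire quantitative content of the Lemma, and you never prove it. Your plan --- pass to $G_0(y)=y^{2^{n-1}-1}(y+1)^{2^{n-1}}$, note it is branched only over $0$ and $\infty$, and force collisions via Hasse--Weil on the fibre-product curve $\mathcal{C}$ --- is a strategy, not an argument: by your own admission the naive collision count loses exactly the factor $2^n$ that separates the Hasse--Weil main term from the threshold, the genus/monodromy of $\mathcal{C}$ in the presence of wild ramification is left uncontrolled, the conclusion is only claimed ``for all large $n$'', and the remaining ``small $n$'' are relegated to unspecified computation. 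Since the target inequality is sharp (as you note at $n=2$), there is no slack for a lossy estimate, so none of these gaps is cosmetic. The paper closes exactly this point with a single citation: Turnwald's value-set theorem, by which a polynomial of degree $d$ over $\F_{Q}$ that is not a permutation polynomial takes at most $Q-\frac{Q-1}{d}$ values. Your $G_0$ has degree $2^n-1$ and satisfies $G_0(0)=G_0(1)=0$ (here $n>1$ is used), hence is not a permutation; since $2^n-1$ divides $2^{3n}-1$, its value set has size at most $2^{3n}-(2^{2n}+2^n+1)$, i.e.\ $M\ge 2^{2n}+2^n+1$, for every $n>1$ at once, with no asymptotics and no exceptional cases. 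Until you either invoke such a value-set bound or actually carry out the effective genus and monodromy computation, the proposal does not prove the Lemma.

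There is also a logical slip in your reduction, though it is repairable. The negation of the Lemma says that every \emph{omitted} value $b$ (i.e.\ every $b\ne 0$ with $\ker L_b=\{0\}$) has norm $1$; it says nothing about the values $b$ with $\dim\ker L_b\ge 2$, which lie in the \emph{image} of $H$. So your claim ``were the Lemma false, every $b$ with $\dim\ker L_b\ge 2$ would have norm $1$'' is a non-sequitur, and the bound $M\le 1+|H_1|-\#\{b:\dim\ker L_b\ge 2\}$ built on it collapses. The fix uses your own (correct and rather elegant) norm identity $N_{2^{3n}/2^n}(H(t))=N_{2^{3n}/2^n}\bigl((1+t)/t\bigr)$: since $|H_1|=2^{2n}+2^n>0$, at least one norm-one element of $\F_{2^{3n}}^*$ is \emph{attained} by $H$; if the Lemma were false, the omitted set, of size $M\ge 2^{2n}+2^n+1$, would be contained in the norm-one set, of size exactly $2^{2n}+2^n+1$, hence equal to it, contradicting that some norm-one element is attained. (This repaired step is actually cleaner than the corresponding step of the paper, which exhibits attained norm-one values by counting points on the curve $y^{2^n-1}+y^{2^{n-1}-1}+x^{2^n-1}=0$; your incidence identity for $M$ and the linearization via $L_b$ are also fine. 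But with the value-set bound missing, the proof as a whole stands on nothing.)
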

\begin{proof}
 First of all notice that $\{H(t) \mid t \in \mathbb{F}_{2^{3n}}^*\}=\{ t^m+t^{m-1} \mid t \in \mathbb{F}_{2^{3n}}^*\}$, with $m= 2^{3n}-j=2^{3n}-2^{2n+1}+1$.
\noindent
The function $\theta : \mathbb{F}_{2^{3n}} \to \mathbb{F}_{2^{3n}}$, defined by $\theta(x)=x^{q^{n-1}}$, is an automorphism of $\mathbb{F}_{2^{3n}}$ and hence $$N_{{2^{3n}}/{2^n}}(x)=1 \iff N_{{2^{3n}}/{2^n}}(\theta(x))= 1.$$
Therefore
$$\exists\ b\in \mathbb{F}_{2^{3n}}^* \ : \ b \notin Im(H), \ N_{{2^{3n}}/{2^n}}(b)\neq 1 \iff$$
$$\exists\ b\in \mathbb{F}_{2^{3n}}^*\ : \ b \notin Im(\theta \circ H), \ N_{{2^{3n}}/{2^n}}(b)\neq 1.$$
\noindent
We have that
$$G(t)=(\theta \circ H)(t)=t^{(2^{3n}-2^{2n+1}+1)2^{n-1}}+t^{(2^{3n}-2^{2n+1})2^{n-1}}=t^{2^n-1}+t^{2^{n-1}-1}.$$
Since $n>1$ and $G(0)=G(1)=0$, $G(t)$ is not a permutation polynomial and it has degree $2^n-1$. Then by \cite{Turnwald1995}, its value set has size at most $2^{3n}-\frac{2^{3n}-1}{2^n-1}=2^{3n}-(2^{2n}+2^n+1)$. The number of elements of $\mathbb{F}_{2^{3n}}$ having norm over $\mathbb{F}_{2^n}$ equal to $1$ is exactly $2^{2n}+2^n+1$.
\noindent
In the following we will prove that there exist at least $2^n+2$ elements in the value set of $G$ having norm over $\mathbb{F}_{2^n}$ equal to $1$.  Note that an element having norm equal to $1$ has the form $x^{2^{n}-1}$ for some $x \in \mathbb{F}_{2^{3n}}^*$.  Consider the curve $\mathcal{C}$ defined by
$$f(x,y)=y^{2^n-1}+y^{2^{n-1}-1}+x^{2^{n}-1}=0.$$
An affine $\mathbb{F}_{2^{3n}}$-rational point of $\mathcal{C}$ having $x,y \neq 0$ corresponds to an element $b=x^{2^{n}-1}$ belonging to the image of $G$ such that $N_{{2^{3n}}/{2^n}}(b)=1$.
Intersecting the curve $\cal C$ with the lines $\ell_t\,:\, x=ty$ we get that the coordinates of the $\mathbb{F}_{2^{3n}}$-rational points of $\mathcal{C}$ having $x,y \neq 0$ are of the form
$$x=\frac{t}{(t^{2^n-1}-1)^{2^{2n+1}}} \,\,\,\,\,\,\,\, y=\frac{1}{(t^{2^n-1}-1)^{2^{2n+1}}},$$
where $t\in  \mathbb{F}_{2^{3n}} \setminus \F_{2^n}$.
Hence  $\mathcal{C}$ has exactly
$$2^{3n}-2^{n}$$
affine $\mathbb{F}_{2^{3n}}$-rational points not lying on the two axes.
Now, since the same value of $x_0^{2^n-1}$ is obtained $2^n-1$ times and since on the vertical line $x=x_0$ the curve $\mathcal C$ has at most $2^n-1$ points, we have that the same element in the image of $G$, with norm 1, can be obtained from at most $(2^n-1)^2$ points of $\mathcal C$. Then there are at least
$$\frac{2^{3n}-2^{n}}{(2^n-1)^2}>2^n+2$$
elements in the image of $G$ having norm equal to $1$. This proves that there exists at least an element $b\in \mathbb{F}_{2^{3n}}^*$ not in the image of $H$ and of norm different from $1$.
\end{proof}

Now, we are able to prove

\begin{prop}\label{prop:4}
Let $f_{i,a,b}$ be the $\F_q$--linear map of $\F_{q^{3n}}$ as defined in Proposition \ref{prop:3} and  put $i=a=1$. If $q=2$, there exists at least one element $b\in \mathbb{F}_{2^{3n}}^*$ such that
\begin{equation}\label{eq4}
\frac{f_{1,1,b}(x)}{x}\notin \mathbb F_{2^n}\quad \text{for each }x\in \mathbb F_{2^{3n}}^*.
\end{equation}
\end{prop}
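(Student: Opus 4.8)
The plan is to reduce condition (\ref{eq4}) to the two properties of $b$ that are guaranteed by Lemma \ref{lem:3}. Writing the map out explicitly, $f_{1,1,b}(x)/x = x + b\,x^{j}$ with $j = 2^{2n+1}-1$, which is exactly the exponent appearing in Lemma \ref{lem:3}. So I would argue by contraposition: assume there is some $x\in\F_{2^{3n}}^*$ with $x + b\,x^{j} = c$ for a $c\in\F_{2^n}$, and show that this forces either $b\in Im(H)$ or $N_{2^{3n}/2^n}(b)=1$.

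The argument splits according to whether $c$ vanishes. If $c\neq 0$, I would substitute $x = c\,t$ with $t = x/c \in \F_{2^{3n}}^*$ and divide through by $c$, obtaining $t + b\,c^{j-1}\,t^{j} = 1$, hence $b\,c^{j-1} = (1-t)/t^{j} = H(t)$. The key simplification is that $c\in\F_{2^n}^*$ forces $c^{2^{2n}}=c$, so $c^{2^{2n}-1}=1$, and since $j-1 = 2(2^{2n}-1)$ we get $c^{j-1}=1$; therefore $b = H(t)\in Im(H)$. (One checks $t\neq 0,1$, so $H(t)$ is defined and nonzero, consistent with $b\neq 0$.)

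If instead $c = 0$, then $x = b\,x^{j}$ in characteristic $2$, so $b = x^{1-j}$ and, by multiplicativity of the norm, $N_{2^{3n}/2^n}(b) = N_{2^{3n}/2^n}(x)^{1-j}$. Since $N_{2^{3n}/2^n}(x)\in\F_{2^n}^*$ has order dividing $2^n-1$, and $1-j = 2-2^{2n+1}\equiv 2 - 2 = 0 \pmod{2^n-1}$ (using $2^n\equiv 1$, hence $2^{2n}\equiv 1$ and $2^{2n+1}\equiv 2$), the exponent acts trivially and $N_{2^{3n}/2^n}(b)=1$.

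Putting the two cases together, whenever (\ref{eq4}) fails the element $b$ must lie in $Im(H)$ or have norm $1$ over $\F_{2^n}$; choosing $b$ as in Lemma \ref{lem:3} — outside $Im(H)$ and of norm different from $1$ — therefore guarantees (\ref{eq4}). The step requiring the most care, and the only content beyond Lemma \ref{lem:3}, is the pair of exponent computations showing $c^{j-1}=1$ and $N_{2^{3n}/2^n}(x)^{1-j}=1$: these collapses are precisely what make the two failure modes align exactly with the two exclusions supplied by the lemma, so that no additional counting or case analysis is needed.
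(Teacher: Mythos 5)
Your proof is correct and takes essentially the same route as the paper's: both reduce the failure of (\ref{eq4}) to the two alternatives $b\in Im(H)$ or $N_{2^{3n}/2^n}(b)=1$, and then invoke Lemma \ref{lem:3} to pick a $b$ avoiding both. Your normalizing substitution $x=ct$ together with the exponent computation $c^{j-1}=1$ is just an unrolled form of the paper's observation that $g(\eta x)=\eta g(x)$ for all $\eta\in\F_{2^n}$, which is how the paper reduces to the cases $g(x_0)=1$ and $g(x_0)=0$.
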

\begin{proof}
Taking $q=2$ and $i=a=1$ in $f_{i,a,b}(x)=ax^{2^i}+bx^{2^{2n+i}}$, Condition \eqref{eq4} reads
\begin{equation}\label{eq5}\frac{x^2+bx^{2^{2n+1}}}{x}=x+bx^{2^{2n+1}-1}\notin \mathbb F_{2^n}\quad \text{for each }x\in \mathbb F_{2^{3n}}^*.
\end{equation}
Let $g(x):=\frac{f_{1,1,b}(x)}{x}=x+bx^{2^{2n+1}-1}$ for each $x\in \mathbb F_{2^{3n}}^*$. Note that since $g(\eta x)=\eta g(x)$ for each $\eta\in\F_{q^{n}}$, Condition (\ref{eq5}) is satisfied if $g(x)\ne 0$ and $g(x)\ne 1$ for each $x\in \mathbb F_{2^{3n}}^*$. If there is an element $x_0\in\F_{q^{3n}}^*$ such that $g(x_0)=1$, then the corresponding $b$ belongs to the image of the function $H$ defined in Lemma \ref{lem:3}. If there is an element $x_0\in\F_{q^{3n}}^*$ such that $g(x_0)=0$, then the corresponding $b$ has norm equal to 1. By Lemma \ref{lem:3} there is an element $b_0\in \mathbb{F}_{2^{3n}}^*$ not belonging to the image of $H$ and having norm different from $1$. This implies that  Condition \eqref{eq5}, for $b_0$, is satisfied and hence $f_{1,1,b_0}$ satisfies Condition \eqref{eq4}.
\end{proof}

Putting together Propositions \ref{prop:3} and \ref{prop:4} we get the following
\begin{theorem}\label{thm:family(3)}
For each integer  $n>1$,
the set $$L_U=\{\langle x^{2}+bx^{2^{2n+1}}+x\omega\rangle_{\F_{2^{2n}}}:\
x\in\F_{2^{3n}}^*\},$$ where $b\in\F_{2^{3n}}^*$ with $N_{2^{3n}/{2^n}}(b)\ne 1$ and such that
$$
x+bx^{2^{2n+1}-1}\notin \mathbb F_{2^n}\quad \text{for each }x\in \mathbb F_{2^{3n}}^*
$$
is a scattered $\F_2$--linear set of the
projective plane $PG(2,2^{2n})$ of rank $3n$.
\end{theorem}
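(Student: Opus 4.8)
The plan is to read off this theorem as the specialization $q=2$, $i=a=1$ of Proposition \ref{prop:3}, together with the existence statement of Proposition \ref{prop:4}. First I would substitute these values into the binomial map $f_{i,a,b}(x)=ax^{q^i}+bx^{q^{2n+i}}$, obtaining $f_{1,1,b}(x)=x^{2}+bx^{2^{2n+1}}$; the set $L_U$ displayed in the theorem is then exactly the one in Proposition \ref{prop:3}. The arithmetic hypothesis $\gcd(i,2n)=1$ of that proposition holds trivially since $\gcd(1,2n)=1$, and any $\omega\in\F_{2^{2n}}\setminus\F_{2^n}$ furnishes a relation $\omega^2=A+B\omega$ with $A\ne 0$ (the minimal polynomial of $\omega$ over $\F_{2^n}$ has nonzero constant term, for otherwise $\omega\in\F_{2^n}$).

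Next I would match the hypotheses on $b$ to Condition \eqref{eq3}. Dividing by $x$ gives $f_{1,1,b}(x)/x=x+bx^{2^{2n+1}-1}$, so the requirement in the statement, namely $x+bx^{2^{2n+1}-1}\notin\F_{2^n}$ for every $x\in\F_{2^{3n}}^*$, is literally $f_{1,1,b}(x)/x\notin\F_{2^n}$, i.e.\ Condition \eqref{eq3} for $f_{1,1,b}$. With that condition in force, Proposition \ref{prop:3} directly gives that $L_U$ is a scattered $\F_2$--linear set of rank $3n$ in $PG(2,2^{2n})$, which is the assertion.

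The only remaining point is that such a $b$ exists, and this is exactly Proposition \ref{prop:4}, which I would simply invoke. It is worth recording how the two stated conditions on $b$ split the non-membership $g(x)\notin\F_{2^n}$, where $g(x)=x+bx^{2^{2n+1}-1}$: since $2^{2n+1}-1\equiv 1\bmod(2^n-1)$ one has $g(\eta x)=\eta g(x)$ for all $\eta\in\F_{2^n}$, so $g$ avoids every element of $\F_{2^n}$ as soon as it avoids the two values $0$ and $1$. Avoiding $0$ is forced by $N_{2^{3n}/2^n}(b)\ne 1$ (a zero $g(x_0)=0$ gives $b=x_0^{-(2^{2n+1}-2)}$, and $2^{2n+1}-2\equiv 0\bmod(2^n-1)$ then makes $N_{2^{3n}/2^n}(b)=1$), while avoiding $1$ is governed by the image of the map $H$ of Lemma \ref{lem:3}.

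The genuinely hard step is thus not the theorem itself but the existence behind it, already isolated in Lemma \ref{lem:3}: one must produce a $b$ lying outside $\mathrm{Im}(H)$ and having norm different from $1$. The crux there is a value-set estimate for a non-permutation polynomial of controlled degree (via Turnwald's bound) combined with a count, through the auxiliary curve $\mathcal{C}$, of how many norm-$1$ elements actually appear in the relevant image; the inequality $(2^{3n}-2^n)/(2^n-1)^2>2^n+2$ provides enough such overlap that the image together with the norm-$1$ set cannot exhaust $\F_{2^{3n}}^*$, which forces an admissible $b$ to exist. Granting Lemma \ref{lem:3}, and hence Proposition \ref{prop:4}, the theorem follows immediately.
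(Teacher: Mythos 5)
Your proposal is correct and follows exactly the paper's own route: the paper proves this theorem precisely by ``putting together'' Proposition \ref{prop:3} (specialized to $q=2$, $i=a=1$, with the displayed condition on $b$ being Condition \eqref{eq3}) and Proposition \ref{prop:4} (whose proof, via Lemma \ref{lem:3}, uses the same splitting of the condition into avoiding the values $0$ and $1$ that you describe). Your added verifications (the $\gcd$ hypothesis, the relation $\omega^2=A+B\omega$ with $A\ne 0$, and the homogeneity $g(\eta x)=\eta g(x)$) are accurate and consistent with the paper.
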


\begin{remark}
{\rm
MAGMA computational results show that for $n=3$ and $q\in\{3,4,5\}$ there exist elements $b\in\F_{q^{3n}}^*$ for which the functions $f_{1,1,b}$ satisfy Condition \eqref{eq4} yielding $\F_q$--scattered linear sets in $PG(2,q^6)$, $q\in\{3,4,5\}$, of rank 9. However, taking Theorems \ref{cor:1} and \ref{thm:family(3)} into account, the existence of a family of scattered $\F_q$--linear sets in $PG(2,q^{2n})$ for each $n\equiv 0 \bmod 3$, $q\not\equiv 1 \bmod 3$ and $q>2$, remains an open problem. }
\end{remark}

\section{Constructions in $PG(r-1,q^{t})$}
First of all we prove the following
\begin{theorem}\label{prop:generaliz}
Let $\P=PG(\V,F_{q^t})=PG(r-1,q^t)$ be a projective space and let
\begin{equation}\label{eq:sommadir}
\V=V_1\oplus_{_{\F_{q^{t}}}}\dots\oplus_{_{\F_{q^{t}}}} V_{m},
\end{equation}
with $dim\,V_i=s_i\geq 2$ and $i\in\{1,\dots,m\}$. If $L_{U_i}$ is a scattered $\F_q$--linear set of $PG(V_i,\F_{q^t})=PG(s_i-1,q^t)$ then $L_W$, where
\begin{equation}\label{eq:sommadir1}
W=U_1\oplus_{_{\F_{q}}}\dots\oplus_{_{\F_{q}}}U_{m},\end{equation}
is a scattered $\F_q$--linear set of $\P$.

Also, $L_W$ has maximum rank $\frac{rt}2$ if and only if each $L_{U_i}$ has maximum rank $\frac{s_it}{2}$.
\end{theorem}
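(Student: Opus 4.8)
The plan is to establish the two assertions in turn, first that $L_W$ is scattered and then the rank characterization, after recording one structural observation. Since $\V=V_1\oplus_{\F_{q^t}}\cdots\oplus_{\F_{q^t}}V_m$ is a direct sum over $\F_{q^t}$, the subspaces $V_1,\dots,V_m$ are in particular $\F_q$-independent; as $U_i\subseteq V_i$, the $\F_q$-sum $W=U_1\oplus_{\F_q}\cdots\oplus_{\F_q}U_m$ is genuinely direct. Consequently every $w\in W$ has a \emph{unique} decomposition $w=u_1+\cdots+u_m$ with $u_i\in U_i$, and $\dim_{\F_q}W=\sum_{i=1}^m\dim_{\F_q}U_i$, so that the rank of $L_W$ equals $\sum_{i=1}^m\mathrm{rank}(L_{U_i})$.

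For the scattered property I would argue pointwise. Recall that $L_{U_i}$ being scattered is equivalent to: for every nonzero $u\in U_i$ and every $\lambda\in\F_{q^t}$ with $\lambda u\in U_i$, one has $\lambda\in\F_q$ (this is exactly $\langle u\rangle_{\F_{q^t}}\cap U_i=\langle u\rangle_{\F_q}$, i.e. weight $1$). Now take a nonzero $w=u_1+\cdots+u_m\in W$ and suppose $\lambda w\in W$ for some $\lambda\in\F_{q^t}$; the goal is $\lambda\in\F_q$. Because each $V_i$ is an $\F_{q^t}$-subspace, $\lambda u_i\in V_i$, so $\lambda w=\lambda u_1+\cdots+\lambda u_m$ is the decomposition of $\lambda w$ along the direct sum; by uniqueness of the decomposition of $\lambda w\in W$, one gets $\lambda u_i\in U_i$ for every $i$. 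Picking an index $i$ with $u_i\neq 0$ (one exists since $w\neq 0$) and invoking the scattered property of $L_{U_i}$ yields $\lambda\in\F_q$. Hence every point of $L_W$ has weight $1$, i.e. $L_W$ is scattered.

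For the rank statement I would combine this additivity with the Blokhuis--Lavrauw bound. By Theorem \ref{thm:BL} the rank of any scattered $\F_q$-linear set of $PG(s_i-1,q^t)$ is at most $\tfrac{s_it}{2}$, so $\mathrm{rank}(L_{U_i})\le\tfrac{s_it}{2}$ for each $i$. Summing and using $r=\sum_i s_i$,
\begin{equation*}
\mathrm{rank}(L_W)=\sum_{i=1}^m\mathrm{rank}(L_{U_i})\le\sum_{i=1}^m\frac{s_it}{2}=\frac{rt}{2}.
\end{equation*}
Thus equality $\mathrm{rank}(L_W)=\tfrac{rt}{2}$ forces every termwise inequality $\mathrm{rank}(L_{U_i})\le\tfrac{s_it}{2}$ to be an equality, and conversely; this is precisely the equivalence that each $L_{U_i}$ has maximal rank $\tfrac{s_it}{2}$.

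I do not anticipate a serious obstacle: the argument is essentially bookkeeping with direct sums. The one place demanding care is the scattered step, where one must use that the $V_i$ are $\F_{q^t}$-subspaces (so scaling by $\lambda\in\F_{q^t}$ respects the decomposition) together with the uniqueness of the $\F_q$-direct-sum expression of an element of $W$; keeping the $\F_{q^t}$-structure governing the scaling distinct from the $\F_q$-structure defining $W$ is the only point where a careless argument could fail.
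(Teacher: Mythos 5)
Your proposal is correct and follows essentially the same route as the paper's proof: both reduce the scattered property to the uniqueness of the direct-sum decomposition of an element of $W$ (using that each $V_i$ is an $\F_{q^t}$-subspace, so scaling by $\lambda$ respects the decomposition) and then invoke the scattered property of a single component $L_{U_i}$ with $u_i\neq 0$, while the rank statement follows from additivity of ranks together with the Blokhuis--Lavrauw bound. The only cosmetic differences are that you argue the weight-$1$ property directly where the paper argues by contradiction, and you spell out the final equality-forcing step that the paper dismisses as obvious.
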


\begin{proof}
Let $k_i$ be the rank of $L_{U_i}$. By Theorem \ref{thm:BL} $k_i\leq\frac{s_it}{2}$ for each  $i\in\{1,\dots,m\}$. It is clear that $L_W$ is an $\F_q$--linear set of $\P$ of rank $\sum_{i=1}^mk_i\leq \sum_{i=1}^m\frac{s_it}{2}=\frac{rt}2$. If $P:=\langle\underline{w}\rangle$ is a point of $L_W$ with weight grater than 1, then there exist $\underline{w}'\in W$, $\underline{w}'\ne\underline{0}$, and $\lambda\in\F_{q^{t}}\setminus\F_q$ such that $\underline{w}=\lambda \underline{w}'$. By (\ref{eq:sommadir1}), the vectors $\underline{w}$ and $\underline{w}'$ can be uniquely written as
$$\underline{w}=\underline{u}_1+\dots+\underline{u}_{m}\mbox{\quad and \quad}\underline{w}'=\underline{u}'_1+\dots+\underline{u}'_{m},$$
where $\underline{u}_i,\underline{u}'_i\in U_i$ for each $i\in\{1,\dots,m\}$. Taking $\underline{w}=\lambda \underline{w}'$ and (\ref{eq:sommadir}) into account,  from the previous equalities we get $\underline{u}_i=\lambda \underline{u}'_i$ for each $i\in\{1,\dots,m\}$. Suppose that $j\in\{1,\dots,m\}$ is the smallest number such that $\underline{u}_j\ne\underline{0}$. Then $\underline{u}_j=\lambda \underline{u}'_j$, with $\underline{u}_j\in U_j$, and since $L_{U_j}$ is scattered we get $\lambda\in\F_q^*$, a contradiction. The last part is obvious.
\end{proof}

The previous theorem can be naturally applied when $r$ is even by considering scattered $\F_q$--linear sets of rank $t$ on $\frac r 2$ lines, say $\ell_i$, spanning the whole space $PG(r-1,q^t)$. In such a way we get a scattered $\F_q$--linear set in $PG(r-1,q^t)$ of rank $\frac{rt}2$. We will call this construction of {\it type $(C1)$}. Some scattered linear sets reflecting this construction are those called {\it of pseudoregulus type} (see \cite[Definitions 3.1 and 4.1]{LuMaPoTr-Sub}), for which each scattered linear set on $\ell_i$ is of pseudoregulus type (see \cite[Remark 4.5]{LuMaPoTr-Sub}). Linear sets of pseudoregulus type have been also studied in \cite{MPT2007}, \cite{LMPT}, \cite{LV} and to this family belongs the first explicit example of scattered linear sets obtained by Construction $(C1)$ (see proof of \cite[Thm. 2.5.5]{LPhdThesis}). Also, from \cite[Example 4.6 $(i)$ and $(ii)$]{LuMaPoTr-Sub} it is clear that, by using Construction $(C1)$, we can also obtain scattered linear sets in $PG(r-1,q^t)$, $r$ even, of rank $\frac{rt}2$ which are not of pseudoregulus type.

\section*{Proof of Theorem \ref{mainthm}}

Putting together Theorems \ref{thm:family(1)}, \ref{thm:family(2)} and \ref{thm:family(3)}  and Theorem \ref{prop:generaliz}, it follows that when $t$ is even and $r\geq 5$ we have several ways to construct scattered $\F_q$--linear sets in $PG(\V,F_{q^t})=PG(r-1,q^t)$ of rank $\frac{rt}{2}$, by decomposing $\V$ as a direct sum over $\F_{q^t}$ of vector spaces of dimension 2 and 3, proving in this way Theorem \ref{mainthm}. Obviously, the greater is the integer $r$, the wider are these possible constructions.

\begin{remark}
{\rm From Theorem \ref{thm:BL}, each scattered $\F_q$-linear set of $PG(r-1,q^{2n})$ of rank $rn$ is a two--intersection set of the space with respect to the hyperplanes with intersection numbers
$\theta_{(r-2)n-1}(q)=\frac{q^{(r-2)n}-1}{q-1}$ and $\theta_{(r-2)n}(q)=\frac{q^{(r-2)n+1}-1}{q-1}$. Then, $L_U$ is a {\it $\theta_{(r-2)n-1}(q)$--fold blocking set} (with respect to hyperplanes) in $PG(r-1,q^{2n})$ (\cite[Thm. 6.1]{BL2000}) and gives to rise two-weight linear codes and strongly regular graphs (see \cite{CK} and \cite[Sec. 5]{BL2000}). As observed in \cite{BL}, we want to stress that the parameters of these two--intersection sets are not new. Indeed, sets with the same parameters can be obtained by taking the disjoint union of $\frac{q^n-1}{q-1}$ Baer subgeometries in $PG(r-1,q^{2n})$ isomorphic to $PG(r-1,q^{n})$. This set is called {\it of type $I$} in \cite{BL}. Also in \cite[Thm. 2.2]{BL}, the authors show that a scattered $\F_q$--linear set of maximum rank cannot contain any Baer subgeometry of $PG(r-1,q^{2n})$ and hence the corresponding two--intersection set is not isomorphic to a set of type $I$.
}
\end{remark}

\section{Small complete caps from maximum scattered linear sets}
\textcolor{black}{Many links between the theory of linear sets and a large number of geometrical objects are known. Among them,  two-intersection sets, blocking sets or multiple blocking sets, translation ovoids of polar spaces, translation spreads of the Cayley Generalized Hexagon $H(q)$.  Also, linear sets are widely used in the construction of finite semifields. In this section we describe a connection between $F_2$--linear sets and another classical object in Finite Geometry: complete caps in Galois spaces. Such a connection is indeed fruitful; in fact, the results of the previous section on $F_2$--linear sets provide a solution, for spaces of even square order, to the long-standing problem of establishing whether the theoretical lower bound for the size of a complete cap is substantially sharp.}

\textcolor{black}{We first recall a Definition from \cite[Sec. 2]{Giulietti2007}.}
\begin{defin}
\textcolor{black}{Let $q=2^t$ and let $G$ be an additive subgroup of $\mathbb{F}_q^r$. Let}
$$\cK_{G}:=\{P_v\; |\; v \in G\} \subset AG(r,q),$$
where $P_v$ is the affine point \textcolor{black}{with} coordinates $(a_1,a_2,\ldots,a_r)$ corresponding to the vector $v=(a_1,a_2,\ldots,a_r) \in \mathbb{F}_q^r$. A \emph{translation cap} is a cap in $AG(r,q)$ which coincides with $\cK_G$ for some additive subgroup $G$ of $\mathbb{F}_q^r$.
\end{defin}

Translation caps can be characterized as follows.
\begin{theorem}\label{Th:TranslCap}{\rm \cite[Lemma 2.1]{Giulietti2007}}
For an additive subgroup $G$ of $\mathbb{F}_q^r$, $q$ even, the set $K_G$ is a translation cap if and only if any two non-zero distinct vectors in $G$ are $\mathbb{F}_q$-linearly independent.
\end{theorem}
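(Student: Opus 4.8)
The plan is to translate the geometric collinearity condition into a statement about proportionality of difference vectors, and then to exploit the single structural fact that $G$, being an additive subgroup, is closed under subtraction. Recall that three distinct affine points $P_u,P_v,P_w$ of $AG(r,q)$ are collinear precisely when the direction vectors $v-u$ and $w-u$ are $\F_q$--proportional; equivalently, there exists $\lambda\in\F_q\setminus\{0,1\}$ with $w-u=\lambda(v-u)$, the restriction $\lambda\neq 0,1$ being exactly what encodes that the three points are pairwise distinct. I would prove both implications by contraposition.

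For the forward implication, suppose $G$ contains two distinct non-zero vectors $u,v$ that are $\F_q$--linearly dependent, say $v=\lambda u$ with $\lambda\in\F_q^*$; distinctness forces $\lambda\neq 1$. Then the three points $P_0,P_u,P_v$ all lie in $\cK_G$ (note $0\in G$), they are pairwise distinct because $u$ and $v$ are non-zero and distinct, and they are collinear since $v-0=\lambda(u-0)$. Hence $\cK_G$ fails to be a cap, which is the desired contrapositive.

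For the converse, suppose $\cK_G$ is not a cap, so it contains three distinct collinear points $P_u,P_v,P_w$ with $u,v,w\in G$. Writing $w-u=\lambda(v-u)$ with $\lambda\in\F_q\setminus\{0,1\}$, I would set $a:=v-u$ and $b:=w-u$. Both $a$ and $b$ lie in $G$ by closure under subtraction, both are non-zero since the three points are distinct, and $a\neq b$ (otherwise $v=w$). As $b=\lambda a$ with $\lambda\in\F_q^*$, the vectors $a,b$ form two distinct non-zero elements of $G$ that are $\F_q$--linearly dependent, negating the stated condition.

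The argument is essentially bookkeeping once the right difference vectors are identified, so I do not expect a genuine obstacle; the only point demanding care is to keep the distinctness of the three collinear points synchronized with the non-triviality constraints $\lambda\notin\{0,1\}$ and $a\neq b$, so that the dependence produced in the converse really involves two \emph{distinct} non-zero vectors. I would also note that, since $q$ is even, $G$ is in fact an $\F_2$--subspace and subtraction coincides with addition, making the closure of $G$ under differences immediate.
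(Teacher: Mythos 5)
Your proof is correct. A point of comparison is moot here, though: the paper offers no proof of this statement at all, quoting it as Lemma 2.1 of \cite{Giulietti2007}, so your argument supplies exactly what the paper delegates to that reference. Both contrapositive directions are sound: in the forward one the triple $P_0,P_u,P_{\lambda u}$ (using $0\in G$) witnesses the failure of the cap property, and in the converse the difference vectors $a=v-u$ and $b=w-u$ lie in $G$ by closure under subtraction, are non-zero and distinct precisely because the three collinear points are pairwise distinct, and satisfy $b=\lambda a$ with $\lambda\in\F_q^*$, giving the required dependent pair. Two minor observations. First, your argument never actually uses that $q$ is even; the equivalence holds in any characteristic, and the hypothesis appears in the statement only because in odd characteristic it makes the notion vacuous (any subgroup $G\neq\{0\}$ contains the dependent distinct non-zero vectors $u$ and $2u$, so the only translation cap is a single point). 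Second, the statement reads ``$\mathcal{K}_G$ is a translation cap,'' which, since $G$ is already assumed to be an additive subgroup, amounts to ``$\mathcal{K}_G$ is a cap''; you use this reduction silently but correctly.
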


\begin{prop}\label{Theorem:correspondance}
An $\F_2$--scattered linear set in $PG(r-1,2^t)$, $t>1$, corresponds to a translation cap in $AG(r,2^t)$ and viceversa.
\end{prop}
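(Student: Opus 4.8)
The plan is to set up an explicit correspondence between $\F_2$--linear sets in $PG(r-1,2^t)$ and additive subgroups of $\F_q^r$ (with $q=2^t$), and then show that the scattered condition on one side matches the translation-cap condition on the other via Theorem \ref{Th:TranslCap}. First I would fix $V=\F_{q}^r=\F_{2^t}^r$, viewed as an $\F_2$--vector space of dimension $rt$, and recall that an $\F_2$--linear set $L_U$ of $PG(r-1,2^t)$ is determined by an $\F_2$--subspace $U\subseteq V$; the natural candidate for the corresponding subgroup is $G:=U$ itself, regarded as an additive subgroup of $\F_q^r$. The key observation is that, since $q$ is a power of $2$, the $\F_2$--subspaces of $\F_q^r$ are precisely the additive subgroups of $\F_q^r$, so $U$ and $G$ are literally the same object described in two languages. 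This identification is what makes the correspondence a bijection; I would state it as such and then verify that it carries the two relevant properties into one another.

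The heart of the proof is the translation between the two conditions. On the linear-set side, $L_U$ is scattered precisely when every point of $L_U$ has weight $1$, i.e.\ for every nonzero $u\in U$ the one-dimensional $\F_q$--subspace $\langle u\rangle_{\F_q}$ meets $U$ in an $\F_2$--space of dimension $1$, equivalently $\dim_{\F_2}(\langle u\rangle_{\F_q}\cap U)=1$. On the translation-cap side, Theorem \ref{Th:TranslCap} says $\cK_G$ is a translation cap exactly when any two distinct nonzero vectors of $G$ are $\F_q$--linearly independent. The step to carry out is to show these two statements are equivalent. The clean way is to argue contrapositively: two distinct nonzero $u,u'\in U$ are $\F_q$--linearly dependent if and only if $u'=\lambda u$ for some $\lambda\in\F_q^*$, and since $u\ne u'$ we have $\lambda\ne 1$, so in fact $\lambda\in\F_q\setminus\F_2$ (as $\F_2^*=\{1\}$); this is exactly the condition that $\langle u\rangle_{\F_q}\cap U$ contains two $\F_2$--independent vectors, i.e.\ that the point $\langle u\rangle_{\F_q}$ has weight at least $2$. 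Thus $L_U$ fails to be scattered iff $\cK_G$ fails to be a cap, and the desired equivalence follows.

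I would then assemble these pieces: the map $U\mapsto G=U$ is a bijection between $\F_2$--subspaces of $V$ and additive subgroups of $\F_q^r$, and under it the scattered property corresponds to the translation-cap property, giving the ``and viceversa'' direction for free. A small point worth flagging explicitly is the role of $q=2^t$ with $t>1$: the argument uses crucially that $\F_2^*$ is trivial, so that linear dependence over $\F_q$ of two \emph{distinct} nonzero vectors automatically produces a scalar outside $\F_2$, matching the weight-$\ge 2$ description; for a general prime power base field one would not get such a clean dictionary.

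The main obstacle, modest as it is, is purely bookkeeping: one must be careful that ``the same $U$'' is genuinely the correct subgroup, i.e.\ that the affine points $P_v$ indexed by $v\in G$ reproduce exactly the rank and weight data of $L_U$, and that no collapsing occurs when passing between the projective description (where scalars in $\F_q^*$ are quotiented out) and the affine description (where they are not). The weight-$1$ versus $\F_q$--independence equivalence is where all the content sits, and handling the distinction between $\lambda=1$ and $\lambda\ne 1$ correctly is the one place where the characteristic-$2$, base-field-$\F_2$ hypothesis must be invoked rather than assumed silently.
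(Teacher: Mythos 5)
Your proposal is correct and follows essentially the same route as the paper: identify the $\F_2$--subspace $U$ with an additive subgroup of $\F_{2^t}^r$ (these notions coincide in characteristic $2$), invoke Theorem \ref{Th:TranslCap}, and observe that the failure of the cap condition --- two distinct nonzero vectors $u,u'=\lambda u$ with $\lambda\in\F_{q}\setminus\F_2$ --- is exactly the failure of the weight-$1$ (scattered) condition. The paper states this equivalence more tersely (``all the elements of $U$ correspond to distinct points of $L_U$''), but the content is identical; your explicit handling of the $\lambda\ne 1$ point and of the subgroup/subspace identification is just a more careful write-up of the same argument.
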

\begin{proof}
Let $U$ be an $\mathbb{F}_2$-vector subspace of $V=\mathbb{F}_{2^t}^{r}$, $t>1$, corresponding to the scattered linear set $L_U$ in $PG(V,\F_{2^t})$. Since $U$ in an additive subgroup of $V$, by Theorem \ref{Th:TranslCap}, $\mathcal{K}_U$ is a translation cap if and only if there no two distinct vectors in $U$ are $\mathbb{F}_{2^t}$-linearly dependent. This happens if and only all the elements of $U$ correspond to distinct points of $L_U$, that is $L_U$ is a scattered $\F_2$--linear set.
\end{proof}

Let $\mathcal{SL}$ and $\mathcal{TC}$ be the sets of all the scattered linear sets in $PG(r-1,2^t)$ and all the translation caps in $AG(r,2^t)$. Form the previous theorem we can deduce the existence of a bijective function
$$\varphi : \mathcal{SL} \to \mathcal{TC}$$
which sends $L_U$ to $\varphi(L_U)=\mathcal{K}_U$ for each $\mathbb{F}_2$-vector subspace $U$ of $V=\mathbb{F}_{2^t}^r$.

\begin{prop}
Let $U_1$ and $U_2$ such that $\mathcal{K}_{U_1}$ and $\mathcal{K}_{U_2}$ are equivalent under the action of $A\Gamma L(r,2^t)$. Then $\varphi^{-1}(\mathcal{K}_{U_1})$ and $\varphi^{-1}(\mathcal{K}_{U_2})$ are equivalent under the action of $P\Gamma L(r,2^t)$.
\end{prop}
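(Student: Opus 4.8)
The plan is to show that the affine semilinear map realizing the equivalence of the two caps may be taken to be \emph{linear}, that is, its translation part is superfluous; once this is established, its linear part induces at once the required collineation of $PG(r-1,2^t)$.

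First I would fix an element $g\in A\Gamma L(r,2^t)$ with $g(\cK_{U_1})=\cK_{U_2}$ and write it in the standard form $g(x)=Mx^{\sigma}+b$, with $M\in GL(r,2^t)$, $\sigma\in\mathrm{Aut}(\F_{2^t})$ and $b\in\F_{2^t}^{r}$. Under the identification of $AG(r,2^t)$ with $\F_{2^t}^{r}$, the cap $\cK_{U_i}$ is precisely the set of vectors of the $\F_2$--subspace $U_i$; in particular both caps contain the origin, since $0\in U_1$ and $0\in U_2$. I would then isolate the linear part $h\colon x\mapsto Mx^{\sigma}$. Being additive (as $\sigma$ is a field automorphism acting componentwise) and $\F_{2^t}$--semilinear, $h$ is $\F_2$--linear, so $h(U_1)$ is again an $\F_2$--subspace of $\F_{2^t}^{r}$, and the equality $g(\cK_{U_1})=\cK_{U_2}$ reads $h(U_1)+b=U_2$.

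The crux of the argument is that the translation $b$ is forced to be absorbed. Since $0\in U_2=h(U_1)+b$, there is $u_0\in U_1$ with $h(u_0)+b=0$, whence (recall $\mathrm{char}=2$) $b=h(u_0)\in h(U_1)$; therefore the coset $h(U_1)+b$ coincides with the subspace $h(U_1)$ itself, and we obtain $U_2=h(U_1)$. Thus $h$ alone carries $U_1$ onto $U_2$ as $\F_2$--subspaces. This is the only nontrivial point; everything else is routine, because the corresponding statement for the projective side no longer involves any translation.

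Finally, the semilinear map $h\in\Gamma L(r,2^t)$ induces a collineation $\bar h\in P\Gamma L(r,2^t)$ of $PG(r-1,2^t)$ acting on points by $\langle u\rangle_{\F_{2^t}}\mapsto\langle h(u)\rangle_{\F_{2^t}}$. Since $h$ is a bijection fixing $0$ and $h(U_1)=U_2$, it sends $U_1\setminus\{0\}$ onto $U_2\setminus\{0\}$, so
$$\bar h(L_{U_1})=\{\langle h(u)\rangle_{\F_{2^t}}:u\in U_1\setminus\{0\}\}=\{\langle w\rangle_{\F_{2^t}}:w\in U_2\setminus\{0\}\}=L_{U_2}.$$
As $\varphi^{-1}(\cK_{U_i})=L_{U_i}$ by the bijection established above, this exhibits $\bar h$ as an element of $P\Gamma L(r,2^t)$ mapping $\varphi^{-1}(\cK_{U_1})$ to $\varphi^{-1}(\cK_{U_2})$, which is exactly the claim.
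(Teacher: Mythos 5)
Your proof is correct and follows essentially the same route as the paper's: extract the semilinear part $h\colon x\mapsto Mx^{\sigma}$ of the affine map and let it induce the required element of $P\Gamma L(r,2^t)$ carrying $L_{U_1}=\varphi^{-1}(\mathcal{K}_{U_1})$ onto $L_{U_2}=\varphi^{-1}(\mathcal{K}_{U_2})$. If anything, your treatment of the translation part is tighter than the paper's, which simply asserts that the affine map ``has to fix the $0$-vector'' (not literally forced, since e.g.\ translation by a nonzero vector of $U_2$ preserves $\mathcal{K}_{U_2}$), whereas your observation that $b=h(u_0)\in h(U_1)$, so that $h(U_1)+b=h(U_1)=U_2$ in characteristic $2$, supplies exactly the justification that step needs.
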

\begin{proof}
Let $f\in A\Gamma L(r,2^t)$ be such that $f(\mathcal{K}_{U_1})=\mathcal{K}_{U_2}$. Then $f$ contains no translations, since it has to fix the $0$-vector. Then $f=M\tau$ with $M \in GL(r,2^t)$ and $\tau \in Aut (\mathbb{F}_{2^t})$. Let $$\mathcal{K}_{U_1}=\{0,P_1,P_2,\ldots,P_n\}, \qquad \mathcal{K}_{U_2}=\{0,Q_1,Q_2,\ldots,Q_n\},$$ with $f(0)=0$ and $f(P_i)=Q_i$. Also, let $\varphi^{-1}(\mathcal{K}_{U_1})=\{\widetilde{P}_1,\widetilde{P}_2,\ldots,\widetilde{P}_n\}$ and $\varphi^{-1}(\mathcal{K}_{U_2})=\{\widetilde{Q}_1,\widetilde{Q}_2,\ldots,\widetilde{Q}_n\}$, with $\widetilde{P}_i=\lambda_i P_i$,  $\widetilde{Q}_i=\mu_i Q_i$ and $\lambda_i, \mu_i \neq 0$ for all $i=1,\ldots,n$. Consider $g=\frac{1}{\det(M)}M\tau \in P\Gamma L(r,2^h)$. Then
$$g(\widetilde{P}_i)=\left(\frac{1}{\det(M)}M\tau\right) (\widetilde{P}_i)=\left(\frac{1}{\det(M)}M\tau\right) (\lambda_i P_i)=$$
$$=\frac{\tau(\lambda_i)}{\det(M)}\left(M\tau\right) (P_i)=\frac{\tau(\lambda_i)}{\det(M)}Q_i=\frac{\tau(\lambda_i)}{\mu_i\det(M)}\widetilde{Q}_i.$$
Then $g$ sends $\widetilde{P}_i$ to $\widetilde{Q}_i$ and $\varphi^{-1}(\mathcal{K}_{U_1})$ is projectively equivalent to  $\varphi^{-1}(\mathcal{K}_{U_2})$.
\end{proof}

By \cite[Proposition 2.5]{Giulietti2007}  the maximum size of   a translation \textcolor{black}{cap} in $AG(r, q)$, $q=2^t$ and $t>1$, is  $q^{\frac{r}{2}}$; if the bound is attained then \textcolor{black}{the cap is said to be a  \emph{maximal} translation cap}.  \textcolor{black}{We recall two further results from \cite{Giulietti2007}}.

\begin{lemma}{\rm \cite[Proposition 2.8]{Giulietti2007}}\label{Proposition 2.8} If $\mathcal{K}_G$ is a maximal translation cap in $AG(r, 2^t)$, and $\mathcal{K}_H$ a maximal translation cap in $AG(\overline{r}, 2^t)$, then $\mathcal{K}_G\times \mathcal{K}_H$ is a maximal translation cap in $AG(r+\overline{r}, 2^t)$.
\end{lemma}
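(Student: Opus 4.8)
The plan is to realize the product cap as the translation cap attached to the direct sum of the two defining groups, and then to invoke the characterization of Theorem~\ref{Th:TranslCap}. Writing $q=2^t$, let $G\le\F_q^{\,r}$ and $H\le\F_q^{\,\overline r}$ be the additive subgroups for which $\mathcal{K}_G$ and $\mathcal{K}_H$ are the given maximal translation caps. The first step is to observe that a point $(P_v,P_w)$ of $\mathcal{K}_G\times\mathcal{K}_H$ inside $AG(r+\overline r,2^t)$ is nothing but the affine point $P_{(v,w)}$ corresponding to the vector $(v,w)\in\F_q^{\,r}\times\F_q^{\,\overline r}=\F_q^{\,r+\overline r}$. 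Hence $\mathcal{K}_G\times\mathcal{K}_H=\mathcal{K}_{G\oplus H}$, where $G\oplus H=\{(v,w)\ :\ v\in G,\ w\in H\}$ is again an additive subgroup of $\F_q^{\,r+\overline r}$, so that the object in question does fall under the framework of translation caps.

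By Theorem~\ref{Th:TranslCap} it then suffices to prove that any two distinct non-zero vectors of $G\oplus H$ are $\F_q$-linearly independent. I would argue by contradiction: if $(v_1,w_1)$ and $(v_2,w_2)$ are distinct, non-zero and $\F_q$-dependent, then, both being non-zero, one is a scalar multiple of the other, say $(v_1,w_1)=\lambda(v_2,w_2)$ with $\lambda\in\F_q^*$; equivalently $v_1=\lambda v_2$ and $w_1=\lambda w_2$. A short case analysis on the first component finishes the argument. If $v_2=0$ then $v_1=0$ as well, so both vectors lie in the second factor; being distinct and non-zero forces $w_1,w_2$ to be distinct, non-zero and $\F_q$-dependent in $H$, contradicting that $\mathcal{K}_H$ is a translation cap. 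If instead $v_2\neq0$, then $v_1=\lambda v_2\neq0$; since $\lambda=1$ would give $(v_1,w_1)=(v_2,w_2)$, we must have $\lambda\neq1$, whence $v_1$ and $v_2$ are distinct, non-zero and $\F_q$-dependent in $G$, contradicting that $\mathcal{K}_G$ is a translation cap. These two cases are exhaustive, so $G\oplus H$ satisfies the independence condition and $\mathcal{K}_{G\oplus H}$ is a translation cap.

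It remains to check maximality, which is now a mere size count. Since $\mathcal{K}_G$ and $\mathcal{K}_H$ are maximal, by \cite[Proposition~2.5]{Giulietti2007} one has $|G|=q^{r/2}$ and $|H|=q^{\overline r/2}$, whence $|G\oplus H|=|G|\cdot|H|=q^{(r+\overline r)/2}$. As this equals the maximum possible size of a translation cap in $AG(r+\overline r,2^t)$, again by \cite[Proposition~2.5]{Giulietti2007}, the cap $\mathcal{K}_G\times\mathcal{K}_H$ is maximal. The only mildly delicate point in the whole argument is the bookkeeping of the zero components in the case analysis of the second paragraph; everything else is a direct translation between the additive and the projective pictures together with multiplicativity of the group order.
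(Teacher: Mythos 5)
Your proof is correct. Note, however, that the paper does not prove this lemma at all: it is quoted verbatim from \cite[Proposition 2.8]{Giulietti2007}, so there is no in-paper argument to compare yours against. Your write-up is a complete, self-contained justification along the expected lines: you identify $\mathcal{K}_G\times\mathcal{K}_H$ with $\mathcal{K}_{G\oplus H}$, use the ``only if'' direction of Theorem \ref{Th:TranslCap} to extract the pairwise $\F_q$-independence of nonzero distinct vectors in $G$ and in $H$, and use the ``if'' direction to conclude for $G\oplus H$; the two-case analysis on whether the first components vanish is exhaustive and each case is handled correctly (in particular you rule out $\lambda=1$ before invoking distinctness in $G$). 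The maximality step is also right: both the set $\mathcal{K}_{G\oplus H}$ and the group $G\oplus H$ have cardinality $|G|\cdot|H|=q^{(r+\overline r)/2}$, which is exactly the ceiling of \cite[Proposition 2.5]{Giulietti2007} in dimension $r+\overline r$, and ``maximal'' in this context means precisely that this bound is attained. The only thing you implicitly use without stating is that $v\mapsto P_v$ is a bijection between $G\oplus H$ and $\mathcal{K}_{G\oplus H}$, which is immediate from the definition of $\mathcal{K}_G$; this is not a gap worth flagging.
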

\begin{lemma}{\rm (Doubling construction) \cite[Corollary 2.12]{Giulietti2007}}\label{Corollary 2.12} If $\mathcal{K}_G$ is a maximal translation cap in $AG(r, 2^t)$, then $\mathcal{K}_{G\times \{0,1\}}$ is a complete cap in $AG(r+1, 2^t)$.
\end{lemma}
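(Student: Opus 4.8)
The plan is to establish separately that $\mathcal{K}_{G\times\{0,1\}}$ is a cap and that it is complete, exploiting Theorem~\ref{Th:TranslCap} together with the maximality hypothesis $|G|=q^{\frac r2}$. Since $q=2^t$, the pair $\{0,1\}$ is the subfield $\mathbb{F}_2$, so $H:=G\times\{0,1\}$ is an additive subgroup of $\mathbb{F}_q^{r+1}$ and Theorem~\ref{Th:TranslCap} applies. To see that $\mathcal{K}_H$ is a cap, I would take two distinct non-zero vectors $(v_1,\varepsilon_1),(v_2,\varepsilon_2)\in H$ and suppose $(v_2,\varepsilon_2)=\lambda(v_1,\varepsilon_1)$ with $\lambda\neq 0$; a short case analysis on $\varepsilon_1,\varepsilon_2\in\{0,1\}$ — using that distinct non-zero vectors of $G$ are $\mathbb{F}_q$-independent (Theorem~\ref{Th:TranslCap} applied to $G$) and that $2=0$ in $\mathbb{F}_q$ — rules out every case, so the two vectors are $\mathbb{F}_q$-independent and $\mathcal{K}_H$ is a cap.

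The heart of the argument is the identity $G+\lambda G=\mathbb{F}_q^{r}$ for every $\lambda\in\mathbb{F}_q\setminus\{0,1\}$. First I would show $G\cap\lambda G=\{0\}$: a common non-zero element is $v=\lambda w$ with $v,w\in G$, so $v$ and $w$ are $\mathbb{F}_q$-dependent; since distinct non-zero vectors of $G$ are independent and $\lambda\neq 1$, this forces $v=w=0$. Consequently $|G+\lambda G|=|G|^2=q^{r}$ by the maximality hypothesis $|G|=q^{\frac r2}$, whence $G+\lambda G=\mathbb{F}_q^{r}$.

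For completeness I would show that every point $P=(x,c)\in AG(r+1,q)$ outside $\mathcal{K}_H$ lies on a bisecant. If $c\notin\{0,1\}$, I look for a line through a point $(a,0)$ and a point $(b,1)$ of $\mathcal{K}_H$; this line meets the hyperplane of last coordinate $c$ in the single point $\bigl((1-c)a+cb,\,c\bigr)$, so I need $a,b\in G$ with $(1-c)a+cb=x$. Dividing by $1-c\neq 0$ reduces this to $x/(1-c)\in G+\nu G$ with $\nu=c/(1-c)$; since $c\neq 0$ and $2c\neq 1$ one has $\nu\neq 0,1$, so the identity of the previous paragraph supplies $a,b$. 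If instead $c\in\{0,1\}$, the only bisecants through $P$ lie inside the hyperplane $\{c=\mathrm{const}\}$ and join two points of $G$; writing the collinearity condition as $x=(1-\mu)v_1+\mu v_2$ and again setting $\nu=\mu/(1-\mu)$ reduces it to $x\in G+\nu G=\mathbb{F}_q^{r}$, which is solvable once a scalar $\mu\in\mathbb{F}_q\setminus\{0,1\}$ is fixed — here $q=2^t>2$ is used. In each case the hypothesis $x\notin G$ guarantees that the three collinear points are distinct, so $P$ genuinely lies on a bisecant.

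The main obstacle is the non-degeneracy identity $G+\lambda G=\mathbb{F}_q^{r}$ and its use in the case $c\in\{0,1\}$: covering the external points lying in the two hyperplanes that already contain cap points is precisely what forces $\mathcal{K}_G$ itself to be complete in $AG(r,q)$, and this step is where both the maximality of $\mathcal{K}_G$ and the condition $q>2$ (equivalently $t>1$, so that a scalar outside $\{0,1\}$ exists) are indispensable. The remaining verifications — that $H$ is a subgroup, the collinearity bookkeeping, and the distinctness of the constructed triples — are routine.
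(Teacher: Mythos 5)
Your proof is correct. There is, however, nothing in the paper to compare it against: the lemma is quoted verbatim from \cite[Corollary 2.12]{Giulietti2007} and the paper offers no proof of it, so your argument is a self-contained reconstruction of the result from the cited reference. The two essential points are exactly the ones you isolate: (i) the case analysis on the last coordinates showing that distinct non-zero vectors of $G\times\{0,1\}$ are $\mathbb{F}_q$-independent, so that Theorem \ref{Th:TranslCap} gives the cap property; and (ii) the identity $G+\lambda G=\mathbb{F}_q^{r}$ for every $\lambda\notin\{0,1\}$, which follows from $G\cap\lambda G=\{0\}$ (forced by the independence of distinct non-zero vectors of $G$) together with the count $|G+\lambda G|=|G|^{2}=q^{r}$, available precisely because $\mathcal{K}_G$ is maximal. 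Your completeness bookkeeping is also sound: a point $(x,c)$ with $c\notin\{0,1\}$ is covered by a bisecant through $(a,0)$ and $(b,1)$ because $\nu=c/(1-c)\notin\{0,1\}$ in characteristic $2$, while a point $(x,c)$ with $c\in\{0,1\}$ and $x\notin G$ can only be covered inside its own hyperplane, where a fixed $\mu\notin\{0,1\}$ yields two necessarily distinct points $v_1,v_2\in G$ collinear with $x$; in both cases the constructed points are automatically distinct from $(x,c)$. Your observation that $t>1$ (i.e.\ $q>2$) is indispensable is accurate and consistent with the paper, which imposes $t>1$ whenever it invokes \cite[Proposition 2.5]{Giulietti2007}.
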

\textcolor{black}{We are now in a position to prove the key result of this section.}
%

\begin{prop}\label{MainPropCaps} Let $q=2^t$, $t$ even, and $n\geq 4$ even. If there exists a maximum scattered linear set in $PG(2,q)$, then there exists a complete cap in \textcolor{black}{$AG(n,q)$ with size $2q^{\frac{n-1}{2}}.$}\end{prop}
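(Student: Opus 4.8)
The plan is to translate the hypothesis into a \emph{maximal} translation cap in $AG(3,q)$, inflate it to the correct affine dimension by repeated products, and finish with the doubling construction. Writing $q=2^t$ with $t$ even, a maximum scattered $\F_2$--linear set in $PG(2,q)=PG(2,2^t)$ has rank $\frac{3t}{2}$ by Theorem \ref{thm:BL}. By Proposition \ref{Theorem:correspondance} (and the bijection $\varphi$ introduced just after it), the $\F_2$--subspace $U$ of dimension $\frac{3t}{2}$ defining this linear set corresponds to a translation cap $\mathcal{K}_U$ in $AG(3,q)$ of size $|U|=2^{3t/2}=q^{3/2}$. Since $q^{3/2}$ equals the maximal value $q^{r/2}$ for $r=3$ recalled just before Lemma \ref{Proposition 2.8}, the cap $\mathcal{K}_U$ is in fact a \emph{maximal} translation cap.

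Next I would fix a building block in dimension $2$. For $r=2$ (even) a scattered $\F_2$--linear set of maximal rank $t$ in $PG(1,2^t)$ always exists, so via $\varphi$ it yields a maximal translation cap $\mathcal{C}$ in $AG(2,q)$ of size $2^t=q=q^{2/2}$. These two ingredients, the $3$--dimensional cap coming from the hypothesis and the always-available $2$--dimensional cap, are the only translation caps needed.

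Now comes the dimension bookkeeping. Because $n$ is even, $n-1$ is odd and at least $3$, so I can write $n-1=3+2k$ with $k=\frac{n-4}{2}\ge 0$ an integer. Iterating Lemma \ref{Proposition 2.8} on one copy of $\mathcal{K}_U$ and $k$ copies of $\mathcal{C}$ produces a maximal translation cap in $AG(3+2k,q)=AG(n-1,q)$ of size $q^{3/2}\cdot q^{k}=q^{(3+2k)/2}=q^{(n-1)/2}$. Finally, the doubling construction of Lemma \ref{Corollary 2.12} turns this maximal translation cap in $AG(n-1,q)$ into a complete cap in $AG(n,q)$, whose size is twice that of the cap being doubled, namely $2\,q^{(n-1)/2}$, as required.

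The size computations and the parity argument are routine; the step that needs genuine care is the first one, namely checking that maximality of the \emph{rank} of the scattered linear set corresponds exactly to maximality of the associated translation cap, so that the product Lemma \ref{Proposition 2.8} applies and the sizes multiply as $q^{r/2}\cdot q^{\bar r/2}=q^{(r+\bar r)/2}$. One should also confirm that the $2$--dimensional ingredient $\mathcal{C}$ is available as a maximal translation cap; this follows from the existence of maximum scattered $\F_2$--linear sets whenever $r$ is even (here $r=2$, i.e.\ in $PG(1,2^t)$). With these two points settled, the conclusion is a direct chain of Lemmas \ref{Proposition 2.8} and \ref{Corollary 2.12}.
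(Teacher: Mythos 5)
Your proposal is correct and follows essentially the same route as the paper: turn the maximum scattered linear set into a maximal translation cap in $AG(3,q)$ via Proposition \ref{Theorem:correspondance}, multiply by $\frac{n-4}{2}$ two-dimensional maximal translation caps using Lemma \ref{Proposition 2.8} to reach $AG(n-1,q)$, and finish with the doubling construction of Lemma \ref{Corollary 2.12}. The only cosmetic difference is the two-dimensional building block: the paper uses the explicit cap $\{(x,x^2) : x \in \F_{2^t}\}$, while you invoke the existence of a maximum scattered $\F_2$--linear set in $PG(1,2^t)$ (available since $r=2$ is even) and map it through $\varphi$ --- both yield the same kind of maximal translation cap in $AG(2,q)$, so the arguments coincide.
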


\begin{proof} Let $L$ be a maximum scattered $\F_2$--linear set of $PG(2,2^t)$. Since $t$ is even  it has rank $\frac{3t}{2}$. By Proposition \ref{Theorem:correspondance} it is equivalent to a translation cap $\mathcal{K}$ in $AG(3,2^t)$ of size $2^{\frac{3t}{2}}=\sqrt q^{\,3}$. Since the upper bound of \cite[Proposition 2.5]{Giulietti2007} is attained, $\mathcal{K}$ is a maximal translation cap in $AG(3,2^t)$. Let $n\geq 4$ even and consider in $AG(n-1,2^t)$ the following cap of size $q^{\frac{n-1}{2}}$:
$$\overline{\mathcal{K}}=\left\{\left(a,b,c,x_1,x_1^2,x_2,x_2^2,\ldots,x_{\frac{n-4}{2}},x_{\frac{n-4}{2}}^2\right) : (a,b,c) \in \mathcal{K}, x_i \in \F_{2^t}\right\}.$$
By Lemma \ref{Proposition 2.8}, \textcolor{black}{together with the fact that $\{(x,x^2) : x \in \F_{2^t})\}$
is a translation cap in $AG(2,2^t)$,  $\overline{\mathcal{K}}$} is a maximal translation cap in \textcolor{black}{$AG(n-1,2^t)$}. Now the cap
$$\overline{\overline{\mathcal{K}}}=\left\{\left(a_1,\ldots,a_{n-1},0\right) : (a_1,\ldots,a_{n-1}) \in \overline{\mathcal{K}}\right\} \cup \left\{\left(a_1,\ldots,a_{n-1},1\right) : (a_1,\ldots,a_{n-1}) \in \overline{\mathcal{K}}\right\}$$
is a complete translation cap in \textcolor{black}{$AG(n,2^t)$} of size $2q^{\frac{n-1}{2}}$ by Lemma \ref{Corollary 2.12}.
\end{proof}

The existence of a complete cap in $AG(n,q)$ of size $2q^{\frac{n-1}{2}}$, for $n\ge 4$ even and $q$ an even square,
now follows from Theorems \ref{thm:family(1)}, \ref{thm:family(2)}, \ref{thm:family(3)} and Proposition \ref{MainPropCaps}.
Theorem \ref{MainCaps} in Introduction is then proved.

\begin{remark}\label{completamento}{\rm
\textcolor{black}{For $q$ an even square and $n\ge 4$ even, the trivial lower bound for complete caps is substantially sharp not only in the affine space $AG(n,q)$ but also in the projective space $PG(n,q)$.
In fact, it is possible to show that in ${PG}(2k+4,q)$, $k \geq 0$ there exists a complete cap of size at most $3q^{k+\frac{3}{2}}+4q^{k+1}+3\frac{q^{k+1}-1}{q-1}$ containing the translation cap of size $2q^{k+\frac{3}{2}}$ obtained in Theorem \ref{MainCaps}. The lengthy and technical proof is similar to those of {\rm \cite[Theorem 4.7]{Giulietti2007}} and {\rm \cite[Propositions 2.5 and 5.3]{GPIEEE}}, where a complete translation cap in $AG(n,q)$ is extended to a complete cap in $PG(n,q)$}.}

\end{remark}

\bigskip

\noindent Daniele Bartoli and Massimo Giulietti\\
Dipartimento di Matematica e Informatica,\\
Universit\`a degli Studi
di Perugia,\\
I--\,06123 Perugia, Italy\\
{\em daniele.bartoli@unipg.it}, {\em massimo.giulietti@unipg.it}
\vspace*{.3cm}

\noindent Giuseppe Marino and Olga Polverino\\
Dipartimento di Matematica e Fisica,\\
 Seconda Universit\`a degli Studi
di Napoli,\\
I--\,81100 Caserta, Italy\\
{\em giuseppe.marino@unina2.it}, {\em olga.polverino@unina2.it}

\end{document}